\newtheorem{theorem}{Theorem}[section]
\newtheorem{lemma}[theorem]{Lemma}
\newtheorem{proposition}[theorem]{Proposition}
\newtheorem{corollary}[theorem]{Corollary}
\newtheorem*{theorem:Main}{Theorem~\ref{T:Main}}
\theoremstyle{definition}
\newtheorem{example}[theorem]{Example}
\newtheorem{remark}[theorem]{Remark}
\newtheorem*{acknowledgments}{Acknowledgments}
\theoremstyle{remark}
\newcommand{\FF}{\mathbb{F}}
\newcommand{\ZZ}{\mathbb{Z}}
\newcommand{\QQ}{\mathbb{Q}}
\newcommand{\GG}{\mathbb{G}}
\newcommand{\II}{\mathbb{I}}
\newcommand{\CC}{\mathbb{C}}
\newcommand{\be}{\mathbf{e}}
\newcommand{\bu}{\mathbf{u}}
\newcommand{\cE}{\mathcal{E}}
\newcommand{\cL}{\mathcal{L}}
\DeclareMathOperator{\Log}{Log}
\DeclareMathOperator{\End}{End}
\DeclareMathOperator{\Gal}{Gal}
\DeclareMathOperator{\ord}{ord}
\newcommand{\oK}{\mkern2.5mu\overline{\mkern-2.5mu K}}
\newcommand{\ophi}{\mkern2.5mu\overline{\mkern-2.5mu \phi}}
\newcommand{\sep}{\mathrm{sep}}
\newcommand{\tor}{\mathrm{tor}}
\newcommand{\tpi}{\widetilde{\pi}}
\newcommand{\Ga}{\GG_{\mathrm{a}}}
\newcommand{\power}[2]{{#1 [[ #2 ]]}}
\newcommand{\laurent}[2]{{#1 (( #2 ))}}
\newcommand{\brac}[2]{\genfrac{\{}{\}}{0pt}{}{#1}{#2}}
\newcommand{\norm}[1]{\lVert #1 \rVert}
\newcommand{\smod}[1]{{\, (\mathrm{mod}\, #1)}}
\begin{document}

\title[Log-algebraic identities on Drinfeld modules and special $L$-values]{Log-algebraic identities on Drinfeld modules \\ and special $L$-values}

\author{Chieh-Yu Chang}
\address{Department of Mathematics, National Tsing Hua University, Hsinchu City 30042, Taiwan R.O.C.}
\email{cychang@math.nthu.edu.tw}

\author{Ahmad El-Guindy}
\address{Current address: Science Program, Texas A{\&}M University in Qatar,
Doha, Qatar}
\address{Permanent address: Department of Mathematics, Faculty of Science,
Cairo University, Giza, Egypt 12613}
\email{a.elguindy@gmail.com}

\author{Matthew A. Papanikolas}
\address{Department of Mathematics, Texas A{\&}M University, College Station,
TX 77843, U.S.A.}
\email{papanikolas@tamu.edu}

\thanks{The first author was partially supported by  MOST Grant
  102-2115-M-007-013-MY5.  The third author was partially supported by NSF Grant DMS-1501362}

\subjclass[2010]{Primary 11G09; Secondary 11M38, 11J93}

\date{November 5, 2017}

\begin{abstract}
We formulate and prove a log-algebraicity theorem for arbitrary rank Drinfeld modules defined over the polynomial ring $\mathbb{F}_q[\theta]$.  This generalizes results of Anderson for the rank one case.  As an application we show that certain special values of Goss $L$-functions are linear forms in Drinfeld logarithms and are transcendental.
\end{abstract}

\keywords{Drinfeld modules, log-algebraicity, Taelman's formula, Goss $L$-functions}

\maketitle

\section{Introduction} \label{S:Intro}

In \cite{And94} and \cite{And96}, Anderson introduced the notion of log-algebraicity for rank one Drinfeld modules, inspired by earlier special cases of Thakur~\cite{Thakur92}.  He demonstrated that these power series identities could be used to express values of Goss zeta and $L$-functions at $s=1$ as linear combinations of logarithms.  In the present paper we investigate log-algebraic identities for Drinfeld modules of arbitrary rank over the polynomial ring $\FF_q[\theta]$, and we prove that particular special values of Goss $L$-functions can be expressed in terms of linear combinations of Drinfeld logarithms, thus recovering and extending previous results of Taelman~\cite{Taelman12}.

We let $\FF_q$ be a field with $q$ elements and let $A = \FF_q[\theta]$ be the polynomial ring in a variable~$\theta$.  We take
\[
  A_+ = \{ a \in A \mid \textup{$a$ monic} \}, \quad
  A_{i+} = \{ a \in A_+ \mid \deg a = i \}.
\]
We let $\infty$ denote the infinite place of the fraction field $K = \FF_q(\theta)$ with valuation given by $\ord_\infty = -\deg_{\theta}$ and absolute value normalized by $|\theta|_{\infty} = q$.  We take $K_\infty = \laurent{\FF_q}{1/\theta}$ for the completion of $K$ at $\infty$, and we let~$\CC_\infty$ denote the completion of an algebraic closure of $K_\infty$. Let $\oK$ be the algebraic closure of $K$ in $\CC_{\infty}$.

For simplicity, we recall Anderson's result for the case of Carlitz module, and let us consider the special values of Goss $L$-functions for Dirichlet characters at $s=1$. Fixing an irreducible polynomial $\wp\in A_{+}$ and a Dirichlet character $\chi : A \to \overline{\FF}_q$ modulo $\wp$, we put
\[
L(1,\chi):=\sum_{a\in A_{+}} \frac{\chi(a) }{ a}\in \CC_{\infty}^{\times}.
\]
This special value plays the analogous role of the value at $s=1$ of the classical $L$-series for a Dirichlet character modulo a prime $p$, which is known to be a $\overline{\QQ}$-linear combination of logarithms at certain circular units when the given Dirichlet character is nontrivial (see \cite[p.~37]{Washington}).  To study such $L$-values, Anderson~\cite{And96} introduced the following power series as a kind of deformation of the $L$-value,
\[
  \cL_{C}(\beta,z):=\sum_{a\in A_{+}}\frac{a\star \beta }{a}z^{q^{\deg a}}\in \power{K[x]}{z},
\]
where $x$ and $z$ are new independent variables, $C$ denotes the Carlitz $A$-module defined in~\eqref{E:DefCarlitz}, $\beta \in K[x]$, and $a \star \beta \in K[x]$ is defined by Anderson's $\star$-operation given in \eqref{E:stardef}.

Let $\exp_{C}(z) \in \power{K}{z}$ be the Carlitz exponential function given in \eqref{E:expC}. Anderson's log-algebraicity theorem~\cite[Thm.~3]{And96} asserts that for $\beta \in A[x]$,
\begin{equation}\label{E:expCofL}
\exp_{C}\bigl( \cL_{C}(\beta, z) \bigr)\in A[x,z].
\end{equation}
As an important consequence, by the analogue of the Hermite-Lindemann theorem of Yu~\cite{Yu86},  nonzero values of $\cL_{C}(\beta,z)$ that specialize $x$ and $z$ at elements of $\oK$ are transcendental over~$K$.  Using certain specializations of \eqref{E:expCofL}, Anderson derived an explicit formula for $L(1,\chi)$ in terms of the Carlitz logarithm at certain explicit algebraic points (which are analogues of the classical circular units).  It follows that each $L(1,\chi)$ is transcendental over $K$ by Yu's analogue~\cite{Yu97} of Baker's theorem on linear forms in logarithms.

In this paper we consider Drinfeld $A$-modules of generic characteristic, which are defined over $A$ and have arbitrary rank.  Given Anderson's formulation above it is not immediately clear how to formulate log-algebraicity results for higher rank Drinfeld modules.  However, inspired by Taelman's work on special $L$-values~\cite{Taelman09}, \cite{Taelman10}, \cite{Taelman12}, we succeed in discovering the right point of view.

Fixing a Drinfeld $A$-module $\phi$ that is defined over $A$ (see \eqref{E:phidef}), Taelman~\cite{Taelman12} defined its associated special $L$-value $L(\phi/A)$, as in \eqref{E:TaelmanL}. One notes that $L(\phi/ A)$ is identical to the special value at $s=0$ of the Goss $L$-function denoted by $L(\phi^{\vee},0)$, arising from the compatible system of the Galois representations on the dual of the Tate module of $\phi$, when the Drinfeld module~$\phi$ has everywhere good reduction.  Along these lines, we define the Goss $L$-function $L(\phi^{\vee},s)$ in \eqref{E:Lphidual}, making a particularly suitable choice of the local factors at the bad primes of $\phi$.  In order to construct a log-algebraicity result via a twisted harmonic sum over $A_+$, we shift $s$ by $1$ in $L(\phi^{\vee},s)$ and obtain the Dirichlet series,
\[
  L(\phi^{\vee},s-1) = \sum_{a \in A_+} \frac{\mu(a)}{a^s}
\]
(see \eqref{E:mudef}). We then form the power series
\[
  \cL_{\phi}(\beta,z):=\sum_{a\in A_{+}} \frac{\mu(a)(a\star \beta)(x) }{a}z^{q^{\deg a}} \in \power{K[x]}{z}
\]
for $\beta\in A[x]$, and our main theorem is as follows.

\begin{theorem:Main}
For any $\beta \in A[x]$, the power series
\[
  \cE_{\phi}(\beta,z) := \exp_{\phi} \bigl( \cL_{\phi}(\beta,z) \bigr) \in \power{K[x]}{z},
\]
is in fact in $A[x,z]$.
\end{theorem:Main}

Fixing a Dirichlet character $\chi$ modulo a prime $\wp \in A_{+}$, we consider the following $L$-value twisted by $\chi$:
\[
L(\phi^{\vee},\chi,0):=\sum_{a\in A_{+}}\frac{\mu(a)\chi(a)}{a}\in \CC_{\infty}^{\times}.
\]
As in the case of $L(1,\chi)$, we use Theorem~\ref{T:Main} to demonstrate that
\begin{itemize}
\item $L(\phi^{\vee},\chi,0)$ is a $\oK$-linear combination of Drinfeld logarithms at certain explicit algebraic points (see Corollary~\ref{C:L-log});
\item $L(\phi^{\vee},\chi,0)$ is transcendental over $K$ (see Corollary~\ref{C:Trans-L}).
\end{itemize}
Our proof of Theorem~\ref{T:Main} is rooted in Anderson's strategy~\cite{And96} as follows:
\begin{itemize}
\item We prove the integrality result that the series $\cE_{\phi}(\beta,z)$ has coefficients in $A[x]$ (see Theorem~\ref{T:Integrality}).
\item We use $\infty$-adic estimates to show that $\cE_{\phi}(\beta,z)$ is indeed a polynomial in $z$ (see Theorem~\ref{T:Estimates}).
\end{itemize}

Although the general outline of Anderson's method is robust and, as we will see, can be used for higher ranks, the coefficients of the Dirichlet series $L(\phi^{\vee},s-1)$ given by the multiplicative function $\mu : A_+ \to A$ vary unpredictably and require careful accounting.  By contrast, in the case of the Carlitz module the coefficients are identically~$1$.  To better understand $\mu$, we investigate properties of the characteristic polynomial of Frobenius acting on the Tate module for the reduction of $\phi$ modulo an irreducible $f \in A_+$ and prove new congruence results modulo $f$ for the coefficients of the polynomial $\phi_f(x) \in A[x]$ defining the $f$-operation of $\phi$ (see Lemmas~\ref{L:muprops} and~\ref{L:braccongs}).

In recent years much research has been conducted on extensions of Anderson's log-algebrai\-city theorem to various settings.  Angl\`es, Pellarin, Taelman, and Tavares Ribeiro~\cite{AnglesPellarinTavares16}, \cite{AnglesPellarinTavaresTAMS}, \cite{AnglesTaelman15}, \cite{AnglesTavares17}, have investigated multivariable versions of Anderson's theorem for the Carlitz module and its tensor powers with values in Tate algebras, using new versions of Taelman's class number formula, and have studied modules of special points.  These results were then generalized to rank one Drinfeld modules over more general rings by Angl\`es, Ngo Dac, and Tavares Ribeiro~\cite{AnglesNgoDacTavares17}.  Work of Green and the third author~\cite{GreenP} provides explicit formulas for log-algebraic identities for rank one Drinfeld modules over coordinate rings of elliptic curves.  Log-algebraicity on tensor powers of the Carlitz module is investigated in~\cite{PLogAlg}.  For the most part (save some results in~\cite{AnglesTavares17}, see below) these results lie in the realm of rank one, and one of the underlying goals of the present work is understand these phenomena in higher ranks.

It is important to mention that Angl\`es and Tavares Ribeiro~\cite{AnglesTavares17} considered the $z$-deformed Drinfeld module $\tilde{\phi}$ of a given Drinfeld $A$-module $\phi$ defined over the ring of integers of a finite extension of $K$, and they established an `equivariant' log-algebraicity result for $\tilde{\phi}$ in \cite[Thm.~2]{AnglesTavares17} (we note that the exponential of $\tilde{\phi}$ is not the same as the exponential of $\phi$, but they are closely related).  The proof of their Theorem~2 is based on equivariant class module formulas~\cite[Prop.~4]{AnglesTavares17}, and it differs from Anderson's original methods that we study in this paper. We thank Angl\`es for clarifying these and related issues.  We also thank him for sharing his ideas with us about the possibility of the connection between~\cite[Thm.~2]{AnglesTavares17} and our Theorem~\ref{T:Main}, which would require additional work beyond the scope of this paper.  We further refer the reader to~\cite[Cor.~3]{AnglesTavares17}, where the authors use~\cite[Thm.~2]{AnglesTavares17} to recover Anderson's original log-algebraicity result in the case of the Carlitz module.

The paper proceeds as follows.  In \S\ref{S:Notation} we review the fundamental definitions of Drinfeld $A$-modules and recall aspects of Taelman's special $L$-value formulas.  In \S\ref{S:Charpoly} we discuss the reduction of Drinfeld modules and the construction of Goss $L$-series via characteristic polynomials of Frobenius.  In \S\ref{S:LogAlg} we state the main log-algebraicity result (Theorem~\ref{T:Main}) and explore applications to special $L$-values (Corollaries~\ref{C:L-log} and~\ref{C:Trans-L}).  The main proof is contained in \S\ref{S:Integrality} (integrality estimates) and \S\ref{S:Degrees} (degree estimates).  We conclude with examples in \S\ref{S:Examples}.

\begin{acknowledgments}
We thank Bruno Angl\`{e}s for several discussions about the contents of this paper, including clarifying the connections with his joint work with Tavares Ribeiro.  We also thank the referee for constructive comments that greatly improved the quality of the paper.
\end{acknowledgments}

\section{Notation and setting} \label{S:Notation}

We continue with the notation given in the introduction. Let $\tau : \CC_\infty \to \CC_\infty$ denote the $q$-th power Frobenius endomorphism. For any $\FF_q$-subalgebra $R \subseteq \CC_\infty$ we take $R[\tau]$ to be the ring of twisted polynomials in $\tau$, which are subject to the relation $\tau c = c^q \tau$ for $c \in R$.

We fix throughout a Drinfeld module $\phi : A \to A[\tau]$ of rank~$r \geq 1$ defined by
\begin{equation} \label{E:phidef}
  \phi_\theta = \theta + \kappa_1 \tau + \dots + \kappa_r \tau^r, \quad \kappa_r \neq 0.
\end{equation}
For $a \in A$, we set
\begin{equation} \label{E:brac}
  \phi_a = \sum_{k = 0}^{r \deg a} \brac{a}{k} \tau^k,
\end{equation}
and if $k < 0$ or $k > r \deg a$ we set $\brac{a}{k} = 0$, thus defining $\brac{a}{k} \in A$ for all $a \in A$ and $k \in \ZZ$.  For an $A$-algebra $L$, we let $\phi(L)$ denote a copy of $L$ together with the $A$-module structure induced by $\phi$.  See \cite[Ch.~4]{Goss}, \cite[Ch.~2]{Thakur} for more information on Drinfeld modules.  For convenience we call the additive group $\Ga$ with scalar $A$-action a Drinfeld module of rank~$0$.

The exponential function $\exp_\phi(z) = \sum_{i \geq 0} \alpha_i z^{q^i} \in \power{K}{z}$, with $\alpha_0=1$, is defined by the condition
\begin{equation} \label{E:expfneq}
  \exp_\phi(az) = \phi_a \bigl( \exp_\phi(z) \bigr), \quad \forall\, a \in A,
\end{equation}
and it induces an entire, surjective, $\FF_q$-linear function $\exp_\phi : \CC_\infty \to \CC_\infty$.  Its formal inverse is the logarithm function $\log_\phi(z) = \sum_{i \geq 0} \beta_i z^{q^i} \in \power{K}{z}$, which has a finite radius of convergence in $\CC_\infty$.  Formulas for the coefficients $\alpha_i$, $\beta_i$ can be found in~\cite[\S 3]{EP13}.

An irreducible polynomial $f \in A_+$ of degree $d$ is said to be a prime of good reduction for $\phi$ if $f \nmid \kappa_r$, and otherwise it is a prime of bad reduction.  We let $\FF_f$ denote the field $A/(f)$ with induced $A$-module structure, and we let $\ophi : A \to \FF_f[\tau]$ denote the reduction of $\phi$ modulo $f$,
\begin{equation} \label{E:ophi}
  \ophi_\theta = \bar{\theta} + \bar{\kappa}_1 \tau + \dots + \bar{\kappa}_{r_0} \tau^{r_0}, \quad \bar{\kappa}_{r_0} \neq 0,
\end{equation}
where for $a \in A$ we take $\bar{a} \in \FF_f$ to be its reduction modulo $f$.  We note that $\ophi$ is a Drinfeld module with characteristic $(f)$ of rank $r_0$ with $0 \leq r_0 \leq r$.  The dependence of $r_0$ on $f$ is implied and should not lead to too much confusion.  If $L / \FF_f$ is a field extension, we take $\ophi(L)$ to be $L$ with the $A$-module structure induced by $\ophi$.  For more information on Drinfeld modules over finite fields, see Gekeler~\cite{Gekeler91}.

For a finite $A$-module $M$ with
\[
M \cong A/(f_1) \oplus \cdots \oplus A/(f_n), \quad f_1, \dots, f_n \in A_+,
\]
as in Taelman~\cite[\S 1]{Taelman12} we set
\[
  |M| := f_1 \cdots f_n,
\]
which is independent of the direct sum decomposition chosen and serves as the analogue of the cardinality of a finite abelian group.  The ideal generated by $|M|$ in $A$ is the Fitting ideal of $M$ and also the Euler-Poincar\'{e} characteristic of $M$ defined in Gekeler~\cite[\S 3]{Gekeler91}.

We thus can define Taelman's $L$-value by the infinite product over irreducible $f \in A_+$,
\begin{equation} \label{E:TaelmanL}
  L(\phi/A) = \prod_f \frac{|\FF_f|}{|\ophi(\FF_f)|},
\end{equation}
for which Taelman proved the following result.

\begin{theorem}[{Taelman~\cite[Thm.~1]{Taelman12}}] \label{T:Taelman}
We have
\[
  \exp_\phi \bigl( L(\phi/A) \bigr) \in A.
\]
\end{theorem}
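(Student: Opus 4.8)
The plan is to follow Taelman's analytic class-number strategy, realizing $L(\phi/A)$ as a product of a regulator and a class number attached to two auxiliary $A$-modules over the completion $K_\infty$. First I would fix the completed setting: endow $K_\infty$ with the $A$-module structure $\phi(K_\infty)$ coming from $\phi$, and record that $A \subseteq K_\infty$ is discrete and cocompact (distinct elements of $A$ have $\infty$-adic distance $\geq 1$, and $K_\infty/A \cong \theta^{-1}\FF_q[[\theta^{-1}]]$ is compact). Since $\alpha_0 = 1$, the entire series $\exp_\phi$ restricts to an everywhere convergent $\FF_q$-linear homomorphism $\exp_\phi \colon K_\infty \to \phi(K_\infty)$ that intertwines ordinary multiplication on the source with the $\phi$-action on the target, by~\eqref{E:expfneq}; its zeros are isolated, so $\Ker(\exp_\phi|_{K_\infty})$ is discrete, and near $0$ it is an isometric isomorphism onto an open ball. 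I would then define the \emph{unit module} $U(\phi/A) := \exp_\phi^{-1}(A) \subseteq K_\infty$ and the \emph{class module} $H(\phi/A) := \phi(K_\infty)/\bigl(\exp_\phi(K_\infty) + A\bigr)$, which sit in the four-term exact sequence $0 \to U(\phi/A) \to K_\infty \to \phi(K_\infty)/A \to H(\phi/A) \to 0$.

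Next I would establish the structural inputs. Openness of $\exp_\phi(K_\infty)$ together with cocompactness of $A$ forces $H(\phi/A)$ to be finite, so that $|H(\phi/A)| \in A_+$ is defined. Discreteness of $U(\phi/A)$ forces it to be a finitely generated, torsion-free, hence (since $A$ is a PID) free $A$-module, and comparing it with the lattice $A \subseteq K_\infty$ produces a well-defined regulator $R(\phi) \in K_\infty^\times$, namely the covolume of $U(\phi/A)$ relative to $A$. The key structural point is that this covolume is represented by an actual element $\eta \in U(\phi/A)$, for which $\exp_\phi(\eta) \in A$ by construction; moreover $U(\phi/A)$ is stable under ordinary scalar multiplication by $A$, because $\exp_\phi(cz) = \phi_c(\exp_\phi z)$ and $\phi_c$ maps $A$ into $A$.

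The heart of the argument is the analytic class number formula
\[
  L(\phi/A) = |H(\phi/A)| \cdot R(\phi).
\]
To prove it I would first check convergence of the Euler product: each factor $|\FF_f|/|\ophi(\FF_f)|$ is a $1$-unit at $\infty$ (numerator and the monic order $|\ophi(\FF_f)|$ both have degree $\deg f$), and the Weil bound for $\ophi$—its Frobenius eigenvalues have degree $\deg f/r_0$—forces $|\FF_f|/|\ophi(\FF_f)| \to 1$ rapidly as $\deg f \to \infty$, so the product converges in $K_\infty^\times$. To evaluate the limit I would pass to truncated products over $\deg f \le N$, use the Chinese Remainder Theorem to assemble the $\ophi(\FF_f)$ into $\phi(A/\mathfrak a)$ for $\mathfrak a$ the product of the relevant primes, and compare the finite $A$-module orders on the two sides of the exponential sequence via a trace formula (equivalently, a Fredholm-determinant computation for the induced nuclear operator on the relevant compact $\FF_q$-space), tracking the covolume of $U(\phi/A)$ as $N \to \infty$. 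Granting the formula, integrality is immediate: $R(\phi)$ is represented by $\eta \in U(\phi/A)$ and $|H(\phi/A)| \in A$, so $L(\phi/A) = |H(\phi/A)| \cdot \eta \in U(\phi/A)$ by the stability noted above, which is exactly the assertion $\exp_\phi(L(\phi/A)) \in A$.

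I expect the trace-formula step to be the main obstacle: proving that the truncated Euler products converge $\infty$-adically to precisely $|H(\phi/A)| \cdot R(\phi)$ requires careful accounting of the finite $A$-module orders in the approximation and uniform control of the tail of $\exp_\phi$. By contrast, finiteness of $H(\phi/A)$, freeness of $U(\phi/A)$, and the local-isometry property of $\exp_\phi$ are comparatively routine consequences of the discreteness and cocompactness of $A$ in the non-archimedean topology on $K_\infty$.
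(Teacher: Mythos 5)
Your proposal does not follow this paper's route to the theorem; it is a sketch of Taelman's original proof from \cite{Taelman12}. In the paper, the statement is deliberately quoted as Taelman's result, and the Remark immediately following it records the paper's own (new) proof: it is a corollary of the log-algebraicity theorem, Theorem~\ref{T:Main}. Concretely, take $\beta = 1$, so that $(a \star 1)(x) = 1$ and $\cL_\phi(1,z) = \sum_{a \in A_+} \frac{\mu(a)}{a}\, z^{q^{\deg a}}$. Theorem~\ref{T:Main} (via Theorems~\ref{T:Integrality} and~\ref{T:Estimates}) gives $\cE_\phi(1,z) = \exp_\phi(\cL_\phi(1,z)) \in A[z]$, a polynomial with coefficients in $A$. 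Since $\deg \mu(a) \leq (1 - 1/r)\deg a$ by Lemma~\ref{L:muprops}(d), the series $\cL_\phi(1,z)$ converges at $z=1$ to $L(\phi^\vee,0)$, and $L(\phi/A) = L(\phi^\vee,0)$ by \eqref{E:PfQf1}; continuity of $\exp_\phi$ then yields $\exp_\phi(L(\phi/A)) = \cE_\phi(1,1) \in A$. This is exactly the specialization carried out in \S\ref{S:Examples}. So the two approaches are genuinely different: yours runs through the unit module $U(\phi/A)$, the class module $H(\phi/A)$, and an analytic class number formula; the paper's runs through power-series identities, $f$-adic integrality, and $\infty$-adic degree estimates, and never needs a class module or regulator. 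What the paper's route buys is precisely independence from Taelman's formula (hence a new proof, and the stronger polynomial identity of Theorem~\ref{T:Main}); what Taelman's route buys is the finer arithmetic information $L(\phi/A) = |H(\phi/A)| \cdot R(\phi)$, which the log-algebraic method does not recover.

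Within your chosen route there is, however, a genuine gap: the class number formula $L(\phi/A) = |H(\phi/A)| \cdot R(\phi)$ is the entire difficulty, and you assert it rather than prove it. The surrounding structure is fine and is indeed comparatively routine --- finiteness of $H(\phi/A)$ from openness of $\exp_\phi(K_\infty)$ plus compactness of $K_\infty/A$, discreteness and hence freeness of $U(\phi/A)$ (though you should also note cocompactness, which follows from finiteness of $H(\phi/A)$, to ensure $U(\phi/A)$ has rank exactly one so that the regulator is a well-defined element of $K_\infty^\times$; this is Taelman's Dirichlet unit theorem \cite{Taelman10}), $A$-stability of $U(\phi/A)$, ultrametric convergence of the Euler product, and the final deduction that $L(\phi/A) = |H(\phi/A)|\cdot \eta \in U(\phi/A)$. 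That last deduction is correct and is the same one Taelman uses. But the step you describe as ``compare the finite $A$-module orders on the two sides of the exponential sequence via a trace formula'' is the main theorem of \cite{Taelman12}: it requires comparing $\phi(A/\mathfrak{a})$ with the additive module $\Ga(A/\mathfrak{a})$ for growing moduli, a nuclear-operator determinant formalism, and uniform control of how $\exp_\phi$ transports covolumes in the limit. Naming this machinery is not supplying it, and nothing in your outline indicates how the truncated products converge to $|H(\phi/A)| \cdot R(\phi)$ rather than to some other limit. As written, the proposal is a correct skeleton of Taelman's proof with its core removed, not a proof of the statement.
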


\begin{remark}
Taelman's theorem is in fact more general and precise than what we state here.  He proves that $L(\phi/A)$ can be expressed as the product of the order of a certain finite $A$-module (the class module) and the logarithm of a special point in $\phi(A)$.  Furthermore, he works over arbitrary extensions of $A$ as well.  However, as we saw in \S\ref{S:Intro}, Theorem~\ref{T:Main} provides as a corollary a new proof of Theorem~\ref{T:Taelman} as stated here.  It would be interesting to see if class number formulas can also be obtained from the present log-algebraic methods.
\end{remark}

\section{Characteristic polynomials and Goss $L$-series} \label{S:Charpoly}

We continue with our Drinfeld module $\phi: A \to A[\tau]$ from \S\ref{S:Notation}, and we fix an irreducible polynomial $f \in A_+$ of degree $d$.  We assume that the reduction $\ophi : A \to \FF_f[\tau]$ has rank $r_0$ as in \eqref{E:ophi}.  We review some results due to Gekeler~\cite{Gekeler83}, \cite{Gekeler91}.

Assume for the moment that $r_0 \geq 1$.  Let $\lambda \in A_+$ be irreducible with $\lambda \neq f$, and let
\begin{equation} \label{E:charpoly}
  P_f(x) = x^{r_0} + a_{r_0-1} x^{r_0-1} + \cdots + a_0 \in A[x]
\end{equation}
be the characteristic polynomial of the $q^d$-th power Frobenius $\tau^d$ acting on the Tate module $T_{\lambda}(\ophi)$.  We have the following formulation of a result of Gekeler and subsequent corollary.

\begin{theorem}[{Gekeler~\cite[Thm.~5.1]{Gekeler91}}] \label{T:Gekeler}
Suppose that the rank of $\ophi$ is $r_0 \geq 1$.  For the characteristic polynomial $P_f(x) \in A[x]$ in \eqref{E:charpoly}, the following hold.
\begin{enumerate}
\item[(a)] For some $c_f \in \FF_q^{\times}$, we have $a_0 = c_f^{-1} f$.
\item[(b)] The ideal $(P_f(1)) \subseteq A$ is an Euler-Poincar\'{e} characteristic for $\ophi(\FF_f)$.
\item[(c)] The roots $x_1, \dots, x_{r_0}$ of $P_f(x)$ in $\oK$ satisfy $\deg_{\theta} x_i \leq d/r_0$.
\end{enumerate}
\end{theorem}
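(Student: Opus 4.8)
The plan is to phrase everything in terms of the $q^d$-power Frobenius $\pi = \tau^d$, viewed as an element of $\End_{\FF_f}(\ophi)$ acting $A$-linearly on $T_\lambda(\ophi)$. Since $T_\lambda(\ophi)$ is free of rank $r_0$ over the completion $A_\lambda$, the characteristic polynomial of $\pi$ has degree $r_0$; a standard argument (independence of the auxiliary prime $\lambda$, together with the fact that $\pi$ is integral over $A$) shows its coefficients lie in $A$, producing $P_f(x)$ as in \eqref{E:charpoly}. I would establish the three parts in the order (b), then the divisibility half of (a), and finally (c), which supplies the missing input for (a).

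For part (b), the first step is to identify $\ophi(\FF_f)$ with $\ker(\pi - 1)$: an element $z \in \overline{\FF_f}$ lies in $\FF_f = \FF_{q^d}$ precisely when $z^{q^d} = z$, i.e. when $(\pi - 1)z = 0$, and this kernel is an $A$-submodule because $\pi - 1$ commutes with every $\ophi_a$. As $\ophi(\FF_f)$ is finite, $1$ is not an eigenvalue of $\pi$, so $\pi - 1$ is an isogeny and $P_f(1) \neq 0$. With Gekeler's normalization, the Euler--Poincar\'e characteristic of the kernel of an isogeny $u$ is the ideal $(\det(u \mid T_\lambda))$; applying this to $u = \pi - 1$ and using $P_f(1) = \prod_i (1 - x_i) = (-1)^{r_0}\det(\pi - 1 \mid T_\lambda)$ exhibits $(P_f(1))$ as an Euler--Poincar\'e characteristic for $\ophi(\FF_f)$.

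For the easy half of (a), I would read the Cayley--Hamilton relation $P_f(\pi) = 0$ inside $\FF_f[\tau]$, namely $\tau^{d r_0} + \ophi_{a_{r_0-1}}\tau^{d(r_0-1)} + \dots + \ophi_{a_0} = 0$. Every term except $\ophi_{a_0}$ is left-divisible by $\tau^d$, so the $\tau^0$-coefficient of the left-hand side is just the reduction $\bar a_0$ of $a_0$ modulo $f$; hence $\bar a_0 = 0$, i.e. $f \mid a_0$. To finish I need the complementary bound $\deg_\theta a_0 \leq d$, which is exactly where (c) enters: once $\deg_\theta x_i \leq d/r_0$ for every root, $\deg_\theta a_0 = \deg_\theta \prod_i x_i \leq r_0 \cdot (d/r_0) = d$, and together with $f \mid a_0$ and $\deg_\theta f = d$ this forces $a_0 = c_f^{-1} f$ with $c_f \in \FF_q^\times$.

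The degree estimate (c) is the step I expect to be the main obstacle, as it is the Riemann-Hypothesis-type statement for Drinfeld modules over finite fields. The roots $x_i$ are the images of $\pi$ under the embeddings of $E := K(\pi)$ into $\oK$, so bounding $\deg_\theta x_i$ amounts to controlling the valuations of $\pi$ at the places of $E$. Integrality of $\pi$ over $A$ gives $\ord_v(x_i) \geq 0$ at all finite $v$, and the product formula then expresses $\deg_\theta x_i$ as the total finite-place contribution; the real content is that each $x_i$ is a unit away from $(f)$ and $\infty$ and that the $(f)$-adic weight is \emph{equidistributed} among the $r_0$ roots, giving $\deg_\theta x_i = d/r_0$. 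This purity is genuinely arithmetic: a purely combinatorial attempt via the $\infty$-adic Newton polygon of $P_f$ yields only that the \emph{average} slope is $d/r_0$ and cannot by itself exclude one large root compensated by a small one. The structural analysis of the Frobenius at $\infty$ (the local invariant of the endomorphism algebra $\End^0(\ophi)$ at $\infty$), carried out by Gekeler and Drinfeld, is what makes (c) --- and hence the remaining half of (a) --- go through.
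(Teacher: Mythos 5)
Your proposal should be measured against Gekeler's original argument rather than anything internal to the paper: the paper does not prove this statement at all, but imports it wholesale from Gekeler~\cite[Thm.~5.1]{Gekeler91}, proving only its consequences (Corollary~\ref{C:Pf1}, and later Lemma~\ref{L:braccongs}). With that understood, your treatment of (b) and of half of (a) is correct and is essentially the standard proof. For (b): identifying $\ophi(\FF_f)$ with $\ker(\pi-1)$, where $\pi=\tau^d$ is central in $\FF_f[\tau]$, noting that $\pi-1$ is a nonzero endomorphism (hence an isogeny) which is moreover \emph{separable} (its $\tau^0$-coefficient is $-1\neq 0$, a point you should state, since the Euler--Poincar\'e-characteristic-equals-determinant formula for the naive kernel requires separability), and then applying that formula to $u=\pi-1$. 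For the divisibility half of (a): reading Cayley--Hamilton, $P_f(\tau^d)=0$, inside $\FF_f[\tau]$ via the injectivity of $\End(\ophi)\otimes_A A_\lambda \to \End_{A_\lambda}(T_\lambda(\ophi))$ and extracting the $\tau^0$-coefficient to get $f\mid a_0$ is exactly right --- indeed it is the same mechanism the paper itself uses to prove Lemma~\ref{L:braccongs}(b), so you have independently rediscovered the key trick used elsewhere in the paper.

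The genuine gap is (c), and your own text concedes it: you explain accurately why a Newton-polygon argument cannot work (it only constrains the weighted average of the slopes, and the coefficient bounds of Corollary~\ref{C:Pf1}(a) that would pin down the polygon are \emph{deduced from} (c), not available before it), and you point at the endomorphism algebra at $\infty$, but you do not prove anything; you cite Drinfeld--Gekeler. Moreover, since you route the complementary bound $\deg_\theta a_0 \leq d$ of part (a) through (c), that half of (a) is left conditional as well. To actually close the gap one needs two inputs: (i) the function $u \mapsto q^{\deg_\tau(u)/r_0}$ on $\End^0(\ophi)\setminus\{0\}$ is a multiplicative ultrametric norm extending $|\cdot|_\infty$ on $K$ (via $a\mapsto \ophi_a$, since $\deg_\tau \ophi_a = r_0\deg_\theta a$), which produces \emph{one} place $w$ of $E=K(\pi)$ above $\infty$ with $|\pi|_w = q^{d/r_0}$ on the nose; and (ii) $E\otimes_K K_\infty$ is a field, i.e.\ $E$ has a \emph{unique} place above $\infty$, which is where the division-algebra structure of $\End^0(\ophi)\otimes_K K_\infty$ is irreducibly needed. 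Without (ii), the conjugates $x_i$ arising from embeddings of $E$ that induce other infinite places are completely uncontrolled --- precisely the ``one large root compensated by a small one'' scenario you flag. So your proposal establishes (b) and $f \mid a_0$, correctly names the remaining obstruction, but leaves (c) --- and with it the rest of (a) --- to the literature; as a self-contained proof it is incomplete, though in fairness that is also the status of the statement within the paper itself.
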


\begin{corollary} \label{C:Pf1}
Continuing with the hypotheses of Theorem~\ref{T:Gekeler}, the following  hold.
\begin{enumerate}
\item[(a)] For $1 \leq \ell \leq r_0$, we have $\deg_\theta a_{r_0-\ell} \leq \ell d/r_0$.
\item[(b)] $|\ophi(\FF_f)| = c_f P_f(1)$.
\end{enumerate}
\end{corollary}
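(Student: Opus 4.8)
The plan is to deduce both statements from Theorem~\ref{T:Gekeler} together with the elementary theory of symmetric functions. For part (a), I would express the coefficients of $P_f(x)$ in terms of its roots $x_1, \dots, x_{r_0}$. Since $P_f$ is monic, Vieta's formulas give $a_{r_0 - \ell} = (-1)^{\ell} e_{\ell}(x_1, \dots, x_{r_0})$, where $e_\ell$ denotes the $\ell$-th elementary symmetric polynomial. Each monomial appearing in $e_\ell$ is a product of $\ell$ of the roots $x_i$, so by the bound $\deg_\theta x_i \leq d/r_0$ from Theorem~\ref{T:Gekeler}(c) together with the additivity of $\deg_\theta$ under multiplication, each such monomial has $\theta$-degree at most $\ell d / r_0$. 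The non-archimedean inequality, namely that $\deg_\theta$ of a sum is bounded by the maximum of the summand degrees, then yields $\deg_\theta a_{r_0 - \ell} \leq \ell d / r_0$, as claimed.

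For part (b), I would first observe that $P_f(1)$ and $|\ophi(\FF_f)|$ generate the same ideal of $A$. Indeed, by Theorem~\ref{T:Gekeler}(b) the ideal $(P_f(1))$ is an Euler-Poincar\'e characteristic for $\ophi(\FF_f)$, and by the discussion in \S\ref{S:Notation} this characteristic coincides with the Fitting ideal of $\ophi(\FF_f)$, namely the ideal generated by $|\ophi(\FF_f)|$. Since $A$ is a principal ideal domain whose units are exactly $\FF_q^{\times}$, it follows that $P_f(1)$ and $|\ophi(\FF_f)|$ differ by a scalar in $\FF_q^{\times}$, and it remains only to pin down that scalar.

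The key to identifying the scalar is the strict part of the degree bound in (a). Writing $P_f(1) = 1 + a_{r_0-1} + \cdots + a_1 + a_0$, part (a) shows $\deg_\theta a_{r_0 - \ell} \leq \ell d / r_0 < d$ for every $\ell < r_0$, while Theorem~\ref{T:Gekeler}(a) gives $a_0 = c_f^{-1} f$, which has $\theta$-degree exactly $d$. Hence $a_0$ is the unique term of top degree in $P_f(1)$, so $P_f(1)$ is a polynomial of degree $d$ whose leading coefficient equals that of $c_f^{-1} f$, namely $c_f^{-1}$ since $f$ is monic. Therefore $c_f P_f(1)$ is monic of degree $d$. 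On the other hand $|\ophi(\FF_f)|$ is by definition a product of monic polynomials, hence monic; as two monic generators of the same ideal in $A$ must be equal, I conclude $|\ophi(\FF_f)| = c_f P_f(1)$.

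I expect the only point requiring care to be the normalization argument in (b): the equality of ideals alone determines $P_f(1)$ only up to an $\FF_q^{\times}$-scalar, and it is precisely the strict inequality $\ell d / r_0 < d$ for $\ell < r_0$ that isolates the leading term and fixes the constant to be $c_f$. The symmetric-function estimate in (a) is routine, but it is the essential input that makes this last step work.
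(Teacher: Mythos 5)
Your proposal is correct and follows essentially the same route as the paper: part (a) via elementary symmetric polynomials of the roots and Theorem~\ref{T:Gekeler}(c), and part (b) by using the strict inequality $\ell d/r_0 < d$ for $\ell < r_0$ to show $P_f(1)$ has leading coefficient $c_f^{-1}$, then invoking Theorem~\ref{T:Gekeler}(b) to match ideals and monicity to fix the unit. The paper's proof of (b) is terser, leaving implicit the ideal-equality and monic-normalization steps that you spell out, but the argument is the same.
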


\begin{proof}
We see that $a_{r_0-\ell}$ is $(-1)^{\ell}$ times the degree $\ell$ elementary symmetric polynomial in $x_1, \dots, x_{r_0}$, and so (a) follows from Theorem~\ref{T:Gekeler}(c).  Now $\deg a_0 = d$ by Theorem~\ref{T:Gekeler}(a), which is strictly larger than $\ell d/r_0$ for $1 \leq \ell \leq r_0-1$, and so
\[
  P_f(1) = 1 + a_{r_0-1} + \cdots + a_1 + c_f^{-1}f \in A
\]
has leading coefficient (with respect to $\theta$) equal to $c_f^{-1}$.  Therefore (b) follows.
\end{proof}

\begin{remark}
Determining the exact value of $c_f$ in Theorem~\ref{T:Gekeler}(a) can be intricate, but work of Hsia and Yu~\cite[Thm.~3.1]{HsiaYu00} evaluates it completely.  Knowing its precise value will not be necessary for our present considerations, but on the other hand it is necessary for calculation (e.g., see \eqref{E:mufHasse}).
\end{remark}

We now define the Goss $L$-series $L(\phi,s)$ and $L(\phi^{\vee},s)$ for $\phi$ over $K$, as found in \cite{Goss83}, \cite{Goss92}, \cite[Ch.~8]{Goss}.  If $r_0 \geq 1$, we let
\[
  Q_f(x) = 1 + a_{r_0-1} x + \dots + a_0 x^{r_0} = x^{r_0} P_f(1/x)
\]
be the reciprocal polynomial of $P_f(x)$.  If $r_0 = 0$, then we set $P_f(x) = Q_f(x) = 1$.  Then as we vary over all $f \in A_+$ irreducible, the Goss $L$-function for $\phi$ over $K$ is defined by the Euler product
\begin{equation} \label{E:Lphi}
  L(\phi,s) = \prod_f Q_f \bigl( f^{-s} \bigr)^{-1} = \prod_{f} \frac{1}{1 + a_{r_0-1} f^{-s} + \cdots + c_f^{-1} f^{1-r_0 s}}, \quad s \in \ZZ.
\end{equation}
For $f$ of good reduction ($f \nmid \kappa_r$), we have $r_0 = r$, and so $L(\phi,s)$ is a Goss $L$-series of degree~$r$.  We postpone considerations of convergence in $K_\infty$ until Corollary~\ref{C:convergence}.

\begin{remark}
If $\phi$ has good reduction at $f$, then $P_f(x)$ is the same as the characteristic polynomial of geometric Frobenius $\sigma_f \in \Gal(K^{\sep}/K)$ acting on the $\lambda$-adic Tate module $T_{\lambda}(\phi)$ of $\phi$ itself (e.g., see Goss~\cite[\S 3.4]{Goss92}).  For primes of bad reduction defining the Euler factors of global $L$-series is more subtle, but Gardeyn~\cite[Thm.~7.3]{Gardeyn02} has shown that the characteristic polynomial obtained through the Galois action on $T_{\lambda}(\phi)$ is an element of $A[x]$ and independent of $\lambda$ for all but finitely many $\lambda$.  Moreover he worked out global $L$-factors at bad primes for Drinfeld modules~\cite[\S 8.4]{Gardeyn02}.  However, the polynomials defining Gardeyn's Euler factors may not agree with $P_f(x)$ when $\phi$ has bad reduction at $f$.  On the other hand, we will continue with the definition of $P_f(x)$ for all $f$, as it dovetails with Taelman's $L$-values in Theorem~\ref{T:Taelman} and the log-algebraic identities we prove in Theorem~\ref{T:Main}.
\end{remark}

Although $L(\phi,s)$ is natural to define, it is in fact the dual $L$-series $L(\phi^{\vee},s)$ that appears in Taelman's formulas \cite{Taelman09}, \cite{Taelman10}, \cite{Taelman12}.  Returning to fixed $f$, if we consider instead the Frobenius action on the dual of $T_{\lambda}(\ophi)$, we have characteristic polynomials of $\tau^d$ given by
\begin{align*}
  P_f^{\vee}(x) &= x^{r_0} + \frac{c_f a_1}{f} x^{r_0-1} + \cdots + \frac{c_f}{f}, \\
  Q_f^{\vee}(x) &= 1 + \frac{c_f a_1}{f} x + \cdots + \frac{c_f a_{r_0-1}}{f} x^{r_0-1} + \frac{c_f}{f} x^{r_0}= \frac{P_{f}(x)}{P_{f}(0)}.
\end{align*}
We set the global $L$-series
\begin{equation}\label{E:Lphidual}
  L(\phi^{\vee},s) = \prod_f Q_f^{\vee} \bigl( f^{-s} \bigr)^{-1}, \quad s \in \ZZ.
\end{equation}
Now using Corollary~\ref{C:Pf1}(b), we find that
\begin{equation} \label{E:PfQf1}
   \frac{|\FF_f|}{|\ophi(\FF_f)|}=\frac{f}{c_fP_f(1)}=\frac{f}{f + c_f a_1 + \dots + c_f a_{r_0-1} + c_f}=P_f^{\vee}(1)^{-1} = Q_f^{\vee}(1)^{-1},
\end{equation}
and recalling \eqref{E:TaelmanL} we see (as observed by Taelman~\cite[Rem.~5]{Taelman12}) that~\eqref{E:PfQf1} implies
\begin{equation}
  L(\phi/A) = L(\phi^{\vee},0).
\end{equation}
We now define a function $\mu: A_{+}\to A$ by requiring it to satisfy the Dirichlet series expansion
\begin{equation} \label{E:mudef}
  L(\phi^{\vee},s-1) = \sum_{a \in A_+} \frac{\mu(a)}{a^s}.
\end{equation}
Notice that if we let
\[
  R_f(x):=Q_{f}^{\vee}(fx) = 1 - b_1 x - b_2 f x^2 - \cdots - b_{r-1} f^{r-2}x^{r-1} - b_{r} f^{r-1} x^{r},
\]
with
\begin{equation} \label{E:belldef}
b_{\ell} = \begin{cases}
-c_f a_{\ell} & \textup{if $1 \leq \ell \leq r_0-1$}, \\
-c_f & \textup{if $\ell = r_0$}, \\
0 & \textup{if $r_0 < \ell \leq r$,}
\end{cases}
\end{equation}
then
\begin{equation} \label{E:LphidualEuler}
  L(\phi^{\vee},s-1) = \prod_{f} Q_f^{\vee} \bigl( f^{-(s-1)} \bigr)^{-1} = \prod_{f} R_f \bigl( f^{-s} \bigr)^{-1}.
\end{equation}
It follows that in order for \eqref{E:mudef} to hold for $f\in A_+$ irreducible, we must have for  $r_0 \geq 1$
\begin{equation}
  \mu(f) = b_1 = -c_f a_1,
\end{equation}
and for $r_0 = 0$,
\begin{equation}
  \mu(f^m) := 0, \quad \forall\,m \geq 1.
\end{equation}
Furthermore, the expansion \eqref{E:mudef} implies that $\mu$ is multiplicative and provides a recursion at powers of irreducible polynomials. We collect the properties of $\mu$ in the following lemma, where will use the convention that $\mu(b) = 0$ for $b \in K \setminus A_+$.

\begin{lemma} \label{L:muprops}
The function $\mu : A_+ \to A$ satisfies the following properties.  Fix $f \in A_+$ irreducible of degree $d$ such that $\ophi$ has rank $r_0 \leq r$.
\begin{enumerate}
\item[(a)] $\mu$ is multiplicative.
\item[(b)] For any integer $m > -r$,
\[
  \mu\bigl(f^{m+r} \bigr) = \mu(f) \mu \bigl(f^{m+r-1}\bigr) +
  \sum_{\ell=2}^{r} b_{\ell} f^{\ell-1} \mu\bigl( f^{m+r-\ell} \bigr).
\]
\item[(c)] For any $a \in A_+$,
\[
  \mu(fa) = \mu(f) \mu(a) + \sum_{\ell=2}^{r} b_{\ell} f^{\ell-1} \mu \biggl( \frac{a}{f^{\ell-1}} \biggr).
\]
\item[(d)] For any $m \geq 1$,
\[
  \deg \mu \bigl(f^m \bigr) \leq  \biggl( 1 - \frac{1}{r_0} \biggr) md.
\]
Moreover for any $a \in A_+$,
\[
  \deg \mu(a) \leq \biggl( 1 - \frac{1}{r} \biggr) \deg a.
\]
\end{enumerate}
\end{lemma}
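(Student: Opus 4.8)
My plan is to extract everything from the Euler-product shape of $L(\phi^\vee,s-1)=\prod_f R_f(f^{-s})^{-1}$ in \eqref{E:LphidualEuler}. For (a), each local factor $R_f(f^{-s})^{-1}$ expands as a power series in $f^{-s}$ with coefficients depending only on $f$, so the Dirichlet coefficient $\mu(a)$ in \eqref{E:mudef} factors as the product of the local contributions at the prime powers exactly dividing $a$; this is the standard formal identity forcing $\mu$ to be multiplicative. For (b), I would fix $f$, write $u=f^{-s}$, and read the local factor as $R_f(u)^{-1}=\sum_{m\ge 0}\mu(f^m)u^m$ with $\mu(1)=1$. Multiplying by $R_f(u)=1-b_1u-\sum_{\ell=2}^r b_\ell f^{\ell-1}u^\ell$ and comparing the coefficient of $u^n$ for $n\ge 1$ gives $\mu(f^n)=b_1\mu(f^{n-1})+\sum_{\ell=2}^r b_\ell f^{\ell-1}\mu(f^{n-\ell})$, under the convention $\mu(f^k)=0$ for $k<0$. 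Since $b_1=\mu(f)$, substituting $n=m+r$ (legitimate precisely when $n\ge 1$, i.e.\ $m>-r$) yields the asserted recursion.

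For (c) I would deduce the general-$a$ recursion from (a) and (b). Writing $a=f^m a'$ with $f\nmid a'$, multiplicativity gives $\mu(fa)=\mu(f^{m+1})\mu(a')$; applying (b) to $\mu(f^{m+1})$ and multiplying through by $\mu(a')$ produces $\mu(f)\mu(f^m)\mu(a')+\sum_{\ell=2}^r b_\ell f^{\ell-1}\mu(f^{m+1-\ell})\mu(a')$. Re-absorbing $\mu(a')$ by multiplicativity, using $f^m a'=a$ and $f^{m+1-\ell}a'=a/f^{\ell-1}$, and noting both sides vanish consistently once $m+1-\ell<0$ (by the convention $\mu(b)=0$ for $b\notin A_+$), recovers the stated identity.

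The substantive part is (d). The key input is Corollary~\ref{C:Pf1}(a), which together with \eqref{E:belldef} gives $\deg b_\ell\le (r_0-\ell)d/r_0$ for $1\le \ell\le r_0$ and $b_\ell=0$ for $\ell>r_0$. I would prove $\deg\mu(f^m)\le (1-1/r_0)md$ by strong induction on $m$ (the cases $m=0,1$ being immediate, and $r_0=0$ forcing $\mu(f^m)=0$), using the recursion in the combined form $\mu(f^n)=\sum_{\ell=1}^{r_0} b_\ell f^{\ell-1}\mu(f^{n-\ell})$. The heart is a degree count: each summand satisfies
\[
  \deg\bigl(b_\ell f^{\ell-1}\mu(f^{n-\ell})\bigr)\le \frac{(r_0-\ell)d}{r_0}+(\ell-1)d+\Bigl(1-\frac{1}{r_0}\Bigr)(n-\ell)d,
\]
and the right-hand side collapses: the coefficient of $d$ simplifies to $(r_0-1)n/r_0$ independently of $\ell$. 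Hence every term meets the target $(1-1/r_0)nd$, and so does the whole sum. For arbitrary $a\in A_+$ I would factor $a=\prod_i f_i^{m_i}$ and use multiplicativity together with the per-prime bound: writing $r_0(f_i)$ for the reduction rank at $f_i$, $\deg\mu(a)=\sum_i\deg\mu(f_i^{m_i})\le\sum_i(1-1/r_0(f_i))m_i\deg f_i$; since each $r_0(f_i)\le r$ we have $1-1/r_0(f_i)\le 1-1/r$, and summing gives $\deg\mu(a)\le(1-1/r)\deg a$ (primes with $r_0(f_i)=0$ make $\mu(a)=0$, a trivial case).

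The only real obstacle is the exact cancellation in (d). The bound $(1-1/r_0)md$ is sharp in the sense that each summand of the recursion individually attains it, so the induction closes only because the degree estimates $\deg b_\ell\le(r_0-\ell)d/r_0$ from Corollary~\ref{C:Pf1}(a) are exactly calibrated; any looseness there would break the argument. The passage from the $r_0$-bound to the uniform $r$-bound at the end is precisely where the erratic, reduction-rank-dependent behavior of $\mu$ flagged in the introduction gets tamed.
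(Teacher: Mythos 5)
Your proposal is correct and follows essentially the same route as the paper: parts (a) and (b) come from the Euler product and the generating series $\sum_{m\geq 0}\mu(f^m)x^m = R_f(x)^{-1}$, part (c) from splitting off the $f$-power part of $a$ and using multiplicativity together with (b), and part (d) by induction on powers of $f$ using the degree bounds $\deg(b_\ell f^{\ell-1}) \leq \ell d(1-1/r_0)$ from Corollary~\ref{C:Pf1}(a) and \eqref{E:belldef}, followed by multiplicativity and $r_0 \leq r$ for general $a$. Your explicit check that the degree count collapses exactly to $(1-1/r_0)nd$, and your handling of the $r_0=0$ edge case, are details the paper leaves implicit, but the argument is the same.
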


\begin{proof}
Part (a) follows from the definition of $\mu$ via the Euler product expansion  in \eqref{E:LphidualEuler}.  Likewise the Euler product implies that we have a generating series
\[
  \sum_{m=0}^{\infty} \mu(f^m) x^m = R_f(x)^{-1} =
  \frac{1}{1 - b_1 x - b_2 f x^2 - \cdots - b_{r} f^{r-1} x^{r}},
\]
and this produces the recursion in (b).  Part (c) then follows by writing $a = b f^{m+r-1}$ with $\gcd(a,b) = 1$ and then using the recursion from (b).  For (d) we first consider the case that $a$ is a power of $f$.  We note from Corollary~\ref{C:Pf1}(a) and \eqref{E:belldef} that
\[
  \deg \bigl( b_{\ell} f^{\ell-1} \bigr) \leq \frac{(r_0 - \ell)d}{r_0} + (\ell-1)d =
  \ell d \biggl( 1 - \frac{1}{r_0} \biggr),
\]
and since $\mu(f) = b_1$, we have right away that
\[
  \deg \mu(f) \leq d \biggl( 1 - \frac{1}{r_0} \biggr)
\]
as desired.  For higher powers of $f$ we see from (b) that for $m \in \ZZ$,
\begin{multline*}
  \deg \mu \bigl( f^{m+r_0} \bigr) \leq \max \bigl\{ \deg b_\ell + (\ell - 1) d + \deg \mu\bigl( f^{m + r_0 - \ell} \bigr) \bigm| 2 \leq \ell \leq r_0 \bigr\} \\
  \cup \bigl\{ \deg \mu(f) + \deg \mu\bigl( f^{m+r_0-1} \bigr) \bigr\},
\end{multline*}
and so by induction we find that (d) is satisfied for powers of $f$.  The final result then follows by the multiplicativity of $\mu$, using also the fact that $r_0 \leq r$ for all irreducible $f$.
\end{proof}

\begin{corollary} \label{C:convergence}
As functions of $s \in \ZZ$,
\begin{enumerate}
\item[(a)] $L(\phi^{\vee},s)$ converges in $K_\infty$ for $s \geq 0$;
\item[(b)] $L(\phi,s)$ converges in $K_\infty$ for $s \geq 1$.
\end{enumerate}
\end{corollary}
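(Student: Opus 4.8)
The plan is to reduce both statements to the basic nonarchimedean principle that a family in the complete field $K_\infty$ is summable exactly when its terms tend to $0$ in $|\cdot|_\infty$, and that an infinite product $\prod_f (1 + \delta_f)$ converges to a nonzero limit exactly when $|\delta_f|_\infty \to 0$. In each case everything is then reduced to an estimate of degrees in $\theta$, for which the inputs are Lemma~\ref{L:muprops}(d) and Corollary~\ref{C:Pf1}(a).

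For part (a) I would use the Dirichlet series rather than the Euler product. Replacing $s$ by $s+1$ in \eqref{E:mudef} gives $L(\phi^\vee,s) = \sum_{a \in A_+} \mu(a)/a^{s+1}$, and Lemma~\ref{L:muprops}(d) bounds the general coefficient by $\deg \mu(a) \leq (1 - 1/r)\deg a$. Hence the term indexed by $a$ satisfies
\[
  \deg\bigl(\mu(a)/a^{s+1}\bigr) \leq (1 - 1/r)\deg a - (s+1)\deg a = -(s + 1/r)\deg a,
\]
which tends to $-\infty$ as $\deg a \to \infty$ for every integer $s \geq 0$. Since there are only finitely many $a \in A_+$ of each degree, the terms tend to $0$ and the series converges in $K_\infty$ (agreeing with the Euler product \eqref{E:Lphidual} throughout this range).

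For part (b) the same device fails: the Dirichlet coefficients of $L(\phi,s)$ are the complete homogeneous symmetric functions in the Frobenius roots, and at a prime of bad reduction with small $r_0$ these need not be uniformly small, so no clean global degree bound is available. I would instead estimate the Euler product \eqref{E:Lphi} directly. First one factors off the finitely many primes of bad reduction; these contribute a finite product of well-defined factors (one only needs $Q_f(f^{-s}) \neq 0$ there), which cannot affect convergence. At a prime $f$ of good reduction one has $r_0 = r$ and $Q_f(f^{-s}) = 1 + \sum_{j=1}^{r} a_{r-j}\, f^{-js}$, whose $j$-th summand has degree $\deg a_{r-j} - jsd \leq jd(1/r - s)$ by Corollary~\ref{C:Pf1}(a). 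For $s > 1/r$ the dominant contribution is the $j = 1$ term, of degree at most $d(1/r - s)$, which tends to $-\infty$ as $\deg f \to \infty$; thus the good factors approach $1$ and the product converges. Since $1/r \leq 1$, this covers all integers $s \geq 1$.

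The main obstacle is precisely this interplay between the reduction rank $r_0$ and the generic rank $r$: the coefficient estimate of Corollary~\ref{C:Pf1}(a) is phrased in terms of $r_0$, and the decay rate $q^{d(1/r - s)}$ one needs becomes available only after the finitely many bad primes are isolated and every remaining factor is known to have $r_0 = r$. The same accounting explains the two different abscissae: the dual factors $Q_f^\vee$ carry an additional $1/f$, improving the good-prime decay to $q^{-d(1/r + s)}$ and pushing the range for $L(\phi^\vee,s)$ down to $s \geq 0$, exactly so that $L(\phi^\vee,0) = L(\phi/A)$ lies on the boundary of convergence.
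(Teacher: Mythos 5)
Your proof of part (a) is correct and coincides with the paper's: both rest on the Dirichlet expansion \eqref{E:mudef} and the bound $\deg \mu(a) \le (1-1/r)\deg a$ from Lemma~\ref{L:muprops}(d), which forces the terms to tend to $0$ in $K_\infty$ for every integer $s \ge 0$.

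Part (b) is where you depart from the paper --- its proof runs (b) ``similarly'' to (a), i.e.\ through Dirichlet coefficients bounded via Corollary~\ref{C:Pf1}(a), exactly the route you rejected --- and your replacement argument has a genuine gap at the boundary point $s=1$. You establish that the good Euler factors tend to $1$ for $s > 1/r$ and then conclude ``since $1/r \le 1$, this covers all integers $s \ge 1$.'' When $r=1$ this inference is false: $s > 1/r$ means $s > 1$, which does not include $s=1$. The failure is not cosmetic. For the Carlitz module ($r=1$, good reduction at every prime) one has $P_f(x) = x - f$, so every Euler factor of \eqref{E:Lphi} at $s=1$ is $Q_f(f^{-1}) = 1 - f\cdot f^{-1} = 0$: the factors do not tend to $1$, they all vanish, and the Euler product is meaningless there. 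Convergence of $L(C,1) = \sum_{a \in A_+} a^{0}$ holds only for the Dirichlet series grouped by degree, where $\sum_{a \in A_{i+}} 1 = q^i = 0$ in characteristic $p$ --- i.e.\ via precisely the kind of cancellation that your Euler-product framework cannot see. Your handling of the bad primes has the same defect: you assert parenthetically that ``one only needs $Q_f(f^{-s}) \ne 0$ there,'' but this is not automatic and can fail. A bad prime with $r_0 = 1$ has $Q_f(x) = 1 + c_f^{-1} f x$, so $Q_f(f^{-1}) = 1 + c_f^{-1}$, which vanishes whenever $c_f = -1$; this occurs, for instance, for $\phi_\theta = \theta + \tau + \theta\tau^2$ at $f = \theta$, whose reduction is $\ophi_\theta = \tau$ with $P_\theta(x) = x - \theta$. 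As written, your argument therefore proves (b) only for $s \ge 2$, together with $s=1$ under the extra hypotheses that $r \ge 2$ and that no bad prime has $r_0 = 1$ with $c_f = -1$. (These examples also show the boundary case $s=1$ of the corollary is genuinely delicate on any reading --- the paper's one-line proof glosses over it --- but that observation does not repair the specific inferences above.)
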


\begin{proof}
Part (a) is a consequence of Lemma~\ref{L:muprops}(d), and part (b) follows similarly using Corollary~\ref{C:Pf1}(a).
\end{proof}

\section{Log-algebraicity and special $L$-values} \label{S:LogAlg}

In this section we state the main theorem of the paper, which is a version of Anderson's log-algebraicity theorem for general Drinfeld $A$-modules defined over $A$.  We first recall Anderson's $\star$-operation \cite[\S 3]{And96} for the Carlitz module.

We let $C$ denote the Carlitz $A$-module, which is defined by
\begin{equation}\label{E:DefCarlitz}
C_{\theta} = \theta + \tau.
\end{equation}
For a variable $x$, we define
\[
  \star : A \times K[x] \to K[x]
\]
by
\begin{equation} \label{E:stardef}
  (a \star \beta)(x) := \beta(C_a(x)), \quad a \in A,\ \beta \in K[x].
\end{equation}
One checks the following properties~\cite[\S 3]{And96} for $a$, $b \in A$, $\ell \in K$, and $\beta$, $\gamma \in K[x]$:
\begin{align} \label{E:starprops1}
  a \star (b \star \beta) &= (ab) \star \beta, &
  a \star (\beta + \gamma) &= a \star \beta + a \star \gamma, \\
\label{E:starprops2}
  a \star (\ell \beta) &= \ell(a \star \beta), &
  a \star (\beta\gamma) &= (a \star \beta)(a \star \gamma).
\end{align}
It is notable that in general $(a+b) \star \beta \neq a \star \beta + b \star \beta$, so the $\star$-operation is only a monoidal operation of $A$ and not a ring operation.  Also evident is that the $\star$-operation preserves $A[x]$.

We fix the Drinfeld module $\phi: A \to A[\tau]$ from \eqref{E:phidef} and the associated multiplicative function $\mu : A_+ \to A$ from \eqref{E:mudef}.  For $\beta \in K[x]$, we set
\begin{equation} \label{E:Lphibeta}
  \cL_{\phi}(\beta,z) := \sum_{a \in A_+} \frac{\mu(a)\, (a \star \beta)(x)}{a} \, z^{q^{\deg a}}
   \in \power{K[x]}{z},
\end{equation}
and then our main result is the following.

\begin{theorem} \label{T:Main}
For any $\beta \in A[x]$, the power series
\[
  \cE_{\phi}(\beta,z) := \exp_{\phi} \bigl( \cL_{\phi}(\beta,z) \bigr) \in \power{K[x]}{z},
\]
is in fact in $A[x,z]$.
\end{theorem}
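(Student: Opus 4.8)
The plan is to follow the two-step strategy that the authors themselves outline at the end of the introduction: first prove that $\cE_\phi(\beta,z)$ has coefficients in $A[x]$ (integrality), and then show via $\infty$-adic estimates that $\cE_\phi(\beta,z)$ is a polynomial in $z$, not merely a power series. Once both are established, a power series in $\power{K[x]}{z}$ whose coefficients lie in $A[x]$ and which terminates in $z$ must lie in $A[x,z]$, giving the result.

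For the integrality step, I would exploit the defining functional equation $\exp_\phi(az) = \phi_a(\exp_\phi(z))$ together with the $\star$-operation identities in \eqref{E:starprops1} and \eqref{E:starprops2}. The key is to find a self-referential (functional/recursive) relation for $\cE_\phi(\beta,z)$. Writing out $\exp_\phi\bigl(\cL_\phi(\beta,z)\bigr)$ and reorganizing the harmonic sum $\sum_{a\in A_+}\mu(a)(a\star\beta)(x)z^{q^{\deg a}}/a$ by factoring each $a$ through multiplication by a variable $\theta$ or by irreducible primes, I expect to obtain an identity expressing $\cE_\phi$ in terms of applications of the $\phi$-action (i.e.\ of the operators $\brac{a}{k}$ defined in \eqref{E:brac}) and the $\star$-operation, but crucially with the denominators $a$ cancelling. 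The multiplicativity and the recursion in Lemma~\ref{L:muprops}(a),(c) should be exactly what is needed to organize this cancellation prime-by-prime. The upshot should be a recursion showing each coefficient in $z$ is an $A[x]$-combination of lower ones, so that integrality propagates by induction from the constant term $\cE_\phi(\beta,z)\equiv \beta(x) \smod{z}$.

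For the degree step, the goal is to bound the $z$-degree of $\cE_\phi(\beta,z)$ uniformly, which forces it to be a polynomial. Here the crucial input is the degree control on $\mu$ from Lemma~\ref{L:muprops}(d), namely $\deg\mu(a)\leq (1-1/r)\deg a$, combined with the growth rates of the exponential coefficients $\alpha_i$ of $\exp_\phi$ and the $\star$-operation's effect on $\deg_\theta$ of $(a\star\beta)(x)$. I would estimate the $\infty$-adic absolute value (equivalently $\deg_\theta$) of the coefficient of each power $z^{q^n}$ in $\cE_\phi(\beta,z)$, summing the contributions of all $a\in A_+$ of each degree $i$ (there are $q^i$ such $a$, contributing a factor $i$ to the $\deg_\theta$ budget) against the negative contribution $-\deg a$ from the denominator and the decay of the $\alpha_i$. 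The $(1-1/r)$ saving in Lemma~\ref{L:muprops}(d) is what should make the relevant degrees eventually negative for large enough powers of $z$, showing those coefficients vanish once combined with the already-proven integrality (an element of $A[x]$ of negative $\theta$-degree is zero).

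The main obstacle I expect is the integrality step, specifically finding the correct functional/recursive identity for $\cE_\phi$ and tracking the arithmetic of $\mu$ through it. Unlike Anderson's Carlitz case where $\mu\equiv 1$ and the $\star$-operation is a clean ring-like action, here the coefficients $\mu(a)$ carry the subtle characteristic-polynomial-of-Frobenius data encoded in the $b_\ell$ of \eqref{E:belldef}, and the congruence behaviour of the $\brac{a}{k}$ modulo primes $f$ must match up with the recursion in Lemma~\ref{L:muprops}(c) for the denominators to clear. Making this matching precise—presumably via congruences for $\brac{f}{k}\smod{f}$ relating the Frobenius data to the $\phi$-operation (the Lemmas~\ref{L:muprops} and the promised congruence lemma)—is where the real work lies; the $\infty$-adic degree estimates, while technical, are a comparatively routine accounting once Lemma~\ref{L:muprops}(d) is in hand.
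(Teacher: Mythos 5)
Your two-step outline (coefficient integrality, then $\infty$-adic decay plus discreteness of $A[x]$) is precisely the paper's strategy, and your integrality sketch tracks the actual proof well: the paper localizes at each irreducible $f$, applies the functional equation \eqref{E:expfneq} with $a=f$ and $a=b_\ell$ to produce a recursion among the coefficients $E_i(\beta)$ of $\cE_\phi(\beta,z)$, and clears the denominator $f$ by exactly the matching you predict, between the recursion $\mu(fa)=\mu(f)\mu(a)+\sum_{\ell\geq 2}b_\ell f^{\ell-1}\mu(a/f^{\ell-1})$ of Lemma~\ref{L:muprops}(c) and the congruences $\brac{f}{k}-\sum_{\ell=1}^{\lfloor k/d\rfloor}\brac{b_\ell}{k-\ell d}\equiv 0\pmod{f}$ of Lemma~\ref{L:braccongs}, which come from Cayley--Hamilton for Frobenius on the Tate module (the identity $\ophi_f-\ophi_{b_1}\tau^d-\cdots-\ophi_{b_r}\tau^{rd}=0$ in $\FF_f[\tau]$). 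One ingredient your sketch omits is the $f$-adic bound $\ord_f(\alpha_j)\geq-(q^j-1)$ on the exponential coefficients (from \cite{EP13}), needed to handle the sum over $a$ prime to $f$; but that is of the same nature as what you describe.

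The genuine gap is in your degree step. You propose a term-by-term estimate of the coefficient-wise $\deg_\theta$ of $E_i(\beta)$, balancing $\deg\mu(a)\leq(1-1/r)\deg a$ and the denominator $-\deg a$ against ``the $\star$-operation's effect on $\deg_\theta$ of $(a\star\beta)(x)$.'' But that effect is unbounded growth which swamps the savings: already for $\phi=C$ and $\beta=x$, the $x^{q^k}$-coefficient of $(a\star x)(x)/a=C_a(x)/a$ is $\brac{a}{k}/a$ (Carlitz coefficients), of degree $(q^k-1)\deg a-kq^k$, which tends to $+\infty$ with $\deg a$ for every fixed $k\geq 1$. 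The sums $S_i(\beta)$ do decay --- e.g.\ $S_1(x)=-x^q/(\theta^q-\theta)$ for the Carlitz module --- but only because of massive cancellation among the terms, which no term-by-term accounting of naive degrees can detect. (Your parenthetical that the $q^i$ monic polynomials of degree $i$ ``contribute a factor $i$ to the budget'' is also off: ultrametrically the number of summands contributes nothing; the obstruction lies elsewhere.) The missing idea is Anderson's $\star$-invariant norm $\norm{\beta}=\sup_{u\in\II}\,|\beta(\exp_C(u))|_\infty$: since evaluation converts $(a\star\beta)(\exp_C(u))$ into $\beta(\exp_C(au))$, one has $\norm{a\star\beta}=\norm{\beta}$, so the term-by-term bound $\norm{S_i(\beta)}\leq\norm{\beta}\,|\theta|_\infty^{-i/r}$ is legitimate, while $A[x]$ remains discrete for this norm (Proposition~\ref{P:normprops}). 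With that substitute for naive degree your computation goes through exactly as in Theorem~\ref{T:Estimates}; without it, or some equivalent device capturing the cancellation in $S_i(\beta)$, your second step fails.
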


The proof of Theorem~\ref{T:Main} is inspired by the method of Anderson in~\cite{And96}, and it takes up \S\ref{S:Integrality}--\S\ref{S:Degrees}.  The case when $\phi$ is itself the Carlitz module $C$ is a special case of Anderson's theorem~\cite[Thm.~3]{And96}.  In the general case the coefficients $\mu(a)$ impose complications, which will require several intermediary results.  For bounds on the degree of $\cE_{\phi}(\beta,z)$ in $z$, see~\eqref{E:degree}.

\begin{remark}
It would be interesting to explore a proof along the lines of Thakur~\cite[\S 8.10]{Thakur} via explicit formulas for power sums, though this appears to be difficult because of the varying nature $\mu(a)$ (cf.\ Lemma~\ref{L:S1beta}).
\end{remark}

\begin{remark}
It is a natural question to ask whether one can work with a similar $\star$-operation defined using another Drinfeld $A$-module instead of the Carlitz module for the series $\cL_{\phi}(\beta,z)$, including one defined using $\phi$ itself instead of $C$.  At present the authors do not know how to generalize the approach in this paper to tackle this issue and do not know what to predict as far as connections with related arithmetic points of view.  However, the $\star$-operation given by the Carlitz module is the best suited for the study of $L$-values twisted by Dirichlet characters.
\end{remark}

For the remainder of this section we demonstrate how specializations of $\cE_{\phi}(\beta,z)$ can be used to express special $L$-values in terms of $\oK$-linear combinations of Drinfeld logarithms at algebraic points.  We start with constructions similar to Anderson~\cite[\S 4]{And96}, but see also~\cite[\S 3]{LP13}.  Fixing an irreducible element $\wp \in A_+$ with $\deg \wp = d$, we let $\chi : A \to \overline{\FF}_q$ be a Dirichlet character modulo $\wp$, and we investigate twisted $L$-series,
\[
  L(\phi^{\vee},\chi,s-1) = \sum_{a \in A_+} \frac{\chi(a)\mu(a)}{a^s},
\]
especially their values at $s=1$ (similar to Corollary~\ref{C:convergence}, this series converges for $s \geq 1$).  We set $\be(z) = \exp_{C}(\tpi z)$, where $\tpi$ is the period of Carlitz module (see \cite[\S 3.2]{Goss}, \cite[\S 2.5]{Thakur}), and for $m \geq 1$ we let
\[
  \bu_m(z) = \sum_{a \in A_+} \frac{\mu(a) \be(az)^m}{a} \in \power{\CC_\infty}{z}.
\]
The connection with $L$-series arises from Anderson's dual coefficients~\cite[Prop.~10]{And96}.  For $a \in A$ with $\wp \nmid a$ and for $1 \leq m \leq q^d-1$, Anderson defines $\be_m^*(a) \in A[\be(1/\wp)]$ such that for any $a$, $b$ relatively prime to $\wp$,
\[
  \sum_{m=1}^{q^d-1} \be_m^*(a) \be( b/\wp)^m =
  \begin{cases}
  \wp & \textup{if $a \equiv b \pmod{\wp}$,} \\
  0 & \textup{otherwise.}
  \end{cases}
\]
{From} this orthogonality identity we find that
\begin{equation}
  \sum_{\substack{n \in A_+ \\ bn \equiv a \smod{\wp}}} \frac{\mu(n)}{n}
  = \frac{1}{\wp} \sum_{n \in A_+} \frac{\mu(n)}{n}
  \sum_{m=1}^{q^d-1} \be_m^*(a) \be (bn/\wp)^m
  = \frac{1}{\wp} \sum_{m=1}^{q^d-1} \be_m^{*}(a) \bu_m ( b/\wp).
\end{equation}
Multiplying through by $\chi(a)$ and summing over $a\in \FF_{\wp}^{\times}$ and taking $b=1$, we see that
\begin{equation}
L(\phi^{\vee},\chi,0) = \sum_{m=1}^{q^d-1} \Biggl( \frac{1}{\wp} \sum_{a \in \FF_{\wp}^{\times}} \chi(a) \be_m^{*}(a) \Biggr) \bu_m(1/\wp).
\end{equation}
If we let $\gamma_m = 1/\wp \sum_a \chi(a) \be_m^{*}(a)$, which Anderson terms a root number for $\wp$, then $\gamma_m \in \FF_{\wp}(\theta,\be(1/\wp)) \subseteq \oK$, where $\FF_{\wp}$ is naturally viewed in $\oK \subseteq \CC_{\infty}$.  Furthermore,
\[
  \bu_m(1/\wp) = \sum_{a \in A_+} \frac{\mu(a)\, (a \star x^m)(\be(1/\wp))}{a} = \cL_{\phi}(x^m,1)\big|_{x=\be(1/\wp)},
\]
which converges in $\CC_\infty$ since $\be(a/\wp)$, for $a \in A_+$, ranges over a finite set.  We thus see from Theorem~\ref{T:Main} that
\[
  \exp_{\phi} \bigl( \bu_m(1/\wp) \bigr) \in A[\be(1/\wp)],
\]
and we obtain the following result.

\begin{corollary}\label{C:L-log}
Let $\wp \in A_+$ be irreducible of degree $d$.  Let $\chi : A \to \overline{\FF}_{q}$ be a Dirichlet character modulo $\wp$.  Then for some $s$, $1 \leq s \leq q^d-1$, there are elements $u_1, \dots, u_s \in \CC_\infty$, with $\exp_\phi(u_i) \in A[\be(1/\wp)] \subseteq \oK$, and $\gamma_1, \dots, \gamma_s \in \FF_{\wp}\left( \theta, \be(1/\wp) \right)$ so that
\[
  L(\phi^{\vee},\chi,0) = \sum_{m=1}^s \gamma_m u_m.
\]
\end{corollary}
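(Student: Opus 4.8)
The plan is to assemble the linear identity $L(\phi^{\vee},\chi,0) = \sum_{m=1}^{q^d-1} \gamma_m \bu_m(1/\wp)$ derived in the paragraphs above, and then to recognize each $\bu_m(1/\wp)$ as a Drinfeld logarithm at an algebraic point via Theorem~\ref{T:Main}. The engine is Anderson's family of dual coefficients $\be_m^*(a) \in A[\be(1/\wp)]$ together with their orthogonality relation, which invert the system furnished by the values $\be(b/\wp)^m$. First I would use orthogonality to rewrite the residue-restricted harmonic sum $\sum_{bn \equiv a} \mu(n)/n$ as $\tfrac{1}{\wp}\sum_m \be_m^*(a)\,\bu_m(b/\wp)$, then multiply by $\chi(a)$, sum over $a \in \FF_\wp^\times$, and set $b=1$; collecting the inner sums into $\gamma_m = \tfrac{1}{\wp}\sum_a \chi(a)\be_m^*(a)$ yields the displayed expansion for $L(\phi^{\vee},\chi,0)$ as a finite sum of length at most $q^d-1$.

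The next step is to set $u_m := \bu_m(1/\wp)$ and verify the two required properties. For the coefficients, since $\chi$ factors through $(A/\wp)^\times \cong \FF_\wp^\times$ its values lie in $\FF_\wp$, and each $\be_m^*(a)$ lies in $A[\be(1/\wp)]$; dividing by $\wp \in A$ then places $\gamma_m$ in $\FF_\wp(\theta, \be(1/\wp)) \subseteq \oK$. For the points, I would apply Theorem~\ref{T:Main} with $\beta = x^m$: the theorem gives $\cE_\phi(x^m,z) = \exp_\phi(\cL_\phi(x^m,z)) \in A[x,z]$, and specializing $z=1$ and $x=\be(1/\wp)$ produces an element of $A[\be(1/\wp)]$, as required for $\exp_\phi(u_m)$.

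The one point demanding genuine care is the legitimacy of these specializations. Because $\exp_C$ has period lattice $\tpi A$, the value $\be(a/\wp)=\exp_C(\tpi a/\wp)$ depends only on $a \bmod \wp$ and hence ranges over the finite set $\{\be(b/\wp) : b \bmod \wp\}$; this is precisely what makes $\bu_m(1/\wp)$ a convergent sum in $\CC_\infty$ rather than a merely formal series. Using $(a\star x^m)(\be(1/\wp)) = C_a(\be(1/\wp))^m = \be(a/\wp)^m$ I would identify $u_m = \cL_\phi(x^m,1)\big|_{x=\be(1/\wp)}$, and then, since $\cE_\phi(x^m,z)$ is a genuine polynomial by Theorem~\ref{T:Main}, the evaluation commutes with $\exp_\phi$, giving $\exp_\phi(u_m) = \cE_\phi(x^m,1)\big|_{x=\be(1/\wp)} \in A[\be(1/\wp)]$. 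Finally I would discard those indices $m$ for which $\gamma_m = 0$ and relabel the surviving terms as $u_1,\dots,u_s$, $\gamma_1,\dots,\gamma_s$ with $1 \le s \le q^d-1$. The substantive input is entirely Theorem~\ref{T:Main}; the only real obstacle is the bookkeeping around convergence and the interchange of specialization with $\exp_\phi$, which rests squarely on the polynomiality guaranteed by the main theorem.
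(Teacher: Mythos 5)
Your proposal is correct and follows essentially the same route as the paper: Anderson's dual coefficients and their orthogonality relation to express $L(\phi^{\vee},\chi,0) = \sum_m \gamma_m \bu_m(1/\wp)$, the identification $\bu_m(1/\wp) = \cL_{\phi}(x^m,1)\big|_{x=\be(1/\wp)}$, and Theorem~\ref{T:Main} with $\beta = x^m$ to conclude $\exp_\phi(u_m) \in A[\be(1/\wp)]$. Your additional remarks on convergence (via the finiteness of $\{\be(a/\wp)\}$) and on interchanging specialization with $\exp_\phi$ only make explicit what the paper treats implicitly, so there is no substantive difference.
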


\begin{remark}
Parts of this corollary are also covered by work of Fang~\cite[Thm.~1.11]{Fang15} on special $L$-values for abelian $t$-modules.
\end{remark}

\begin{corollary}\label{C:Trans-L}
Let $\phi$ be a Drinfeld $A$-module of rank $r$ defined over $A$, and let $\wp \in A_+$ be irreducible. Then for any Dirichlet character $\chi : A \to \overline{\FF}_{q}$ modulo $\wp$, the special $L$-value $L(\phi^{\vee},\chi,0)$ is transcendental over $K$.
\end{corollary}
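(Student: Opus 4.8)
The plan is to deduce transcendence directly from the explicit formula for $L(\phi^{\vee},\chi,0)$ furnished by Corollary~\ref{C:L-log}, combined with Yu's transcendence theory for Drinfeld logarithms. By that corollary we may write
\[
  L(\phi^{\vee},\chi,0) = \sum_{m=1}^{s} \gamma_m u_m,
\]
where the coefficients $\gamma_m \in \FF_{\wp}(\theta,\be(1/\wp)) \subseteq \oK$ are algebraic and each $u_m \in \CC_\infty$ satisfies $\exp_\phi(u_m) \in A[\be(1/\wp)] \subseteq \oK$. In other words, $L(\phi^{\vee},\chi,0)$ is a $\oK$-linear form in Drinfeld logarithms of algebraic points of $\phi$. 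Since $\phi$ is defined over $A \subseteq \oK$, this places us in exactly the setting to which Yu's results apply, and no further reduction is needed to identify the arithmetic of the value.

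The key input is Yu's analogue of Baker's theorem on linear forms in logarithms~\cite{Yu97}, used here in the same way it was applied to the Carlitz $L$-value $L(1,\chi)$ described in \S\ref{S:Intro}. Its relevant consequence is a clean dichotomy: a $\oK$-linear combination of Drinfeld logarithms at algebraic points, for a Drinfeld module defined over $\oK$, is either equal to $0$ or transcendental over $K$. I would invoke this directly for the form $\sum_m \gamma_m u_m$; the $m=1$ instance is precisely Yu's Hermite--Lindemann theorem~\cite{Yu86} already recalled in the introduction. Granting the nonvanishing $L(\phi^{\vee},\chi,0) \neq 0$ (so that the value lies in $\CC_\infty^{\times}$), the linear form is nonzero and hence transcendental, which is exactly the assertion of the corollary.

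I expect the main obstacle to be twofold. First, one must cite Yu's theorem in a form that genuinely delivers the dichotomy even when the logarithms $u_1,\dots,u_s$ are $K$-linearly dependent or some $u_m$ vanish; Yu's linear-forms framework, which compares $K$-linear and $\oK$-linear dependence relations among logarithms, accommodates this, but the reduction should be stated with care so that no spurious algebraic constant term is introduced. Second, and more essentially, the only non-formal ingredient is the nonvanishing of $L(\phi^{\vee},\chi,0)$: once that is in hand the transcendence is immediate from the dichotomy, so the substance of the corollary is concentrated in the nonvanishing rather than in the transcendence machinery itself. I would therefore treat the nonvanishing assertion $L(\phi^{\vee},\chi,0)\in\CC_\infty^{\times}$ as the crux and verify it before appealing to \cite{Yu97}.
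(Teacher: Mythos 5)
Your skeleton matches the paper's proof: substitute the formula of Corollary~\ref{C:L-log} into a transcendence theorem for $\oK$-linear forms in Drinfeld logarithms at algebraic points, after checking nonvanishing. The only structural difference is the transcendence input: you cite Yu's theorems \cite{Yu86}, \cite{Yu97}, whereas the paper invokes the algebraic independence results of \cite[Thm.~1.1.1]{CP11}, \cite[Thm.~1.1.1]{CP12}. Either suffices, since both yield the dichotomy that a $\oK$-linear combination of logarithms of points in $\phi(\oK)$ is zero or transcendental; for Yu's theorem, though, the reduction you sketch must be carried out over the fraction field of $\End(\phi)$, not merely over $K$, since it is linear independence over that field (which can be strictly larger than $K$ when $\phi$ has complex multiplication) that guarantees $1, u_1, \dots, u_s$ satisfy no nontrivial $\oK$-linear relation.

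The genuine gap is the nonvanishing, which you isolate as ``the crux'' but never prove. As written, your argument shows only that $L(\phi^{\vee},\chi,0)$ is transcendental \emph{if} it is nonzero; the membership $L(\phi^{\vee},\chi,0) \in \CC_\infty^{\times}$ asserted in \S\ref{S:Intro} is a claim that must be established at this point, not a hypothesis one may grant, and it cannot be read off from the formula $\sum_m \gamma_m u_m$ of Corollary~\ref{C:L-log} itself---nothing there prevents the root numbers $\gamma_m$ and the logarithms $u_m$ from conspiring to cancel. The paper settles it with a one-line ultrametric estimate, and this is precisely where Lemma~\ref{L:muprops}(d) enters: for $a \in A_+$ of positive degree we have $\deg \mu(a) \leq (1 - 1/r)\deg a$ and $|\chi(a)|_{\infty} \leq 1$, so
\[
  \left\lvert \frac{\chi(a)\mu(a)}{a} \right\rvert_{\infty} \leq q^{\deg \mu(a) - \deg a} \leq q^{-\deg(a)/r} \leq q^{-1/r} < 1,
\]
while the $a=1$ term is $\chi(1)\mu(1) = 1$, of absolute value $1$. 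The ultrametric inequality then gives $\lvert L(\phi^{\vee},\chi,0) \rvert_{\infty} = 1$, hence nonvanishing. Some direct estimate of this kind on the Dirichlet series is unavoidable; with that single addition your proof is complete.
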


\begin{proof}
By Lemma~\ref{L:muprops} we see that for any $a\in A_+$ of positive degree,
\[
\left\lvert \frac{\chi(a)\mu(a)}{a}\right\rvert_{\infty}\leq q^{-1/r}< 1 .
\]
Hence, $L(\phi^{\vee},\chi,0)$ is non-vanishing as $|\chi(1)\mu(1)|_{\infty}=1$. The desired result follows by combining Corollary~\ref{C:L-log} and \cite[Thm.~1.1.1]{CP11}, \cite[Thm.~1.1.1]{CP12}.
\end{proof}

\begin{remark}
For a fixed prime $\wp$ of $A$, it is an interesting question to study the transcendence degree of $L(\phi^{\vee},\chi,0)$ with varying Dirichlet characters $\chi$ modulo $\wp$. Using the recent advance on the transcendence theory for Drinfeld modules~\cite{CP11}, \cite{CP12}, this question is related to the computation of the rank of the $A$-submodule of $\phi\bigl(A[\be(1/\wp)]\bigr)$ generated by the special points
\[
\exp_{\phi} \bigl( \bu_m(1/\wp) \bigr),
\]
and one needs additional study to tackle this question (cf.~\cite{LP13} for the rank one case).

\end{remark}

\section{Integrality results} \label{S:Integrality}

This section and the next contain the proof of Theorem~\ref{T:Main}.  We fix $\beta \in A[x]$, and as in the case of Anderson~\cite[Thm.~3]{And96}, we first show that the power series $\cE_\phi(\beta,z)$ has coefficients in $A[x]$.  Then in \S\ref{S:Degrees} we use $\infty$-adic estimates to show that $\cE_\phi(\beta,z)$ is a polynomial in $z$.

Fixing an irreducible polynomial $f \in A_+$ of degree $d$, we let $A_{(f)} = \{ g \in K \mid \ord_f(g) \geq 0 \}$ be the local subring of $K$ of $f$-integral elements.

\begin{theorem} \label{T:Integrality}
Let $\beta \in A[x]$ and let $f \in A_+$ be irreducible.  Then $\cE_{\phi}(\beta,z) \in \power{A_{(f)}[x]}{z}$.
\end{theorem}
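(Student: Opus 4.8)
The plan is to expand $\cE_\phi(\beta,z) = \sum_{N\geq 0} E_N(\beta)\,z^{q^N}$ with $E_N(\beta)\in K[x]$ (only $q$-th power terms occur since both $\cL_\phi$ and $\exp_\phi$ are $\FF_q$-linear), and to prove by induction on $N$ that $E_N(\beta)\in A_{(f)}[x]$ for \emph{every} $\beta\in A[x]$ simultaneously. The base case is immediate, as the $a=1$ term of \eqref{E:Lphibeta} gives $E_0(\beta)=\beta$. The engine of the induction is a functional equation for $\cL_\phi$ obtained by sorting the sum \eqref{E:Lphibeta} by divisibility by $f$. Writing each $f$-divisible $a$ as $fa'$, applying the recursion of Lemma~\ref{L:muprops}(c) to $\mu(fa')$, and using the monoidal and multiplicative properties \eqref{E:starprops1}--\eqref{E:starprops2} (so that $(f^\ell a'')\star\beta = a''\star(f^\ell\star\beta)$), I expect the $f$-divisible part to collapse into shifted copies of $\cL_\phi$, yielding
\[
  f\,\cL_\phi(\beta,z) = f\,\cL_\phi^{(f\nmid)}(\beta,z) + \sum_{\ell=1}^{r_0} b_\ell\,\cL_\phi\bigl(f^\ell\star\beta,\, z^{q^{\ell d}}\bigr),
\]
where $\cL_\phi^{(f\nmid)}$ is the subsum over $a$ with $f\nmid a$, manifestly in $\power{A_{(f)}[x]}{z}$. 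Applying the additive map $\exp_\phi$ and using $\exp_\phi(az)=\phi_a(\exp_\phi(z))$ for $a\in A$ then gives
\[
  \phi_f\bigl(\cE_\phi(\beta,z)\bigr) = \exp_\phi\bigl(f\,\cL_\phi^{(f\nmid)}(\beta,z)\bigr) + \sum_{\ell=1}^{r_0}\phi_{b_\ell}\Bigl(\cE_\phi\bigl(f^\ell\star\beta,\,z^{q^{\ell d}}\bigr)\Bigr).
\]
The point is that every $\cE_\phi$-term on the right is shifted by $z\mapsto z^{q^{\ell d}}$ with $\ell\geq 1$, so its $z^{q^N}$-coefficient only involves $E_M(f^\ell\star\beta)$ with $M\leq N-d$, which are $f$-integral by induction.

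Two inputs then control the rest modulo $f$. First, a valuation estimate: running the exponential recursion $(\theta^{q^i}-\theta)\alpha_i = \sum_j \kappa_j\alpha_{i-j}^{q^j}$ and using $\ord_f(\theta^{q^i}-\theta)\leq 1$ and $\kappa_j\in A$, an easy induction gives $\ord_f(\alpha_i)\geq -(q^i-1)$, whence $\ord_f(\alpha_i f^{q^i})\geq 1$; consequently $\exp_\phi(f\,w)\in f\,\power{A_{(f)}[x]}{z}$ for any $w\in\power{A_{(f)}[x]}{z}$, so the middle term $\exp_\phi(f\,\cL_\phi^{(f\nmid)})$ \emph{vanishes modulo $f$}. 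Second, and at the heart of the matter, I need the twisted-polynomial identity
\[
  \ophi_f = \sum_{\ell=1}^{r_0}\ophi_{b_\ell}\,\tau^{\ell d}\quad\text{in } \FF_f[\tau],
\]
which I would derive from the Cayley--Hamilton relation $P_f(\tau^d)=0$ for the Frobenius acting on $\ophi$, together with $a_0=c_f^{-1}f$ from Theorem~\ref{T:Gekeler}(a) and the definition \eqref{E:belldef} of the $b_\ell$; this is exactly the congruence on the coefficients $\brac{f}{k}$ of $\phi_f$ advertised in the introduction. Comparing $z^{q^N}$-coefficients modulo $f$, the inductive integrality of all lower coefficients reduces the entire problem to showing that a single expression $W_N\in\FF_f[x]$, assembled from the $\bar E_M$ with $M<N$, vanishes; using the identity for $\ophi_f$ to rewrite $\overline{\brac{f}{k}}$ in terms of the $\overline{\brac{b_\ell}{k-\ell d}}$, the vanishing of $W_N$ becomes the compatibility
\[
  \bar E_M(f^\ell\star\beta) = \tau^{\ell d}\,\bar E_M(\beta)\quad\text{in } \FF_f[x],
\]
rooted in the congruence $(f^\ell a)\star\beta\equiv (a\star\beta)^{q^{\ell d}}\pmod f$ (valid since $C_{f^\ell}(y)\equiv y^{q^{\ell d}}$ and $c^{q^d}\equiv c \pmod f$ for $c\in A$). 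Granting $W_N=0$, the displayed identity forces $\ord_f\bigl(fE_N(\beta)\bigr)\geq 1$, i.e.\ $E_N(\beta)\in A_{(f)}[x]$, closing the induction.

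The main obstacle, and the point where the higher-rank case genuinely departs from Anderson's Carlitz setting, is establishing these last two congruences and weaving them into the induction: when $\phi=C$ one has $r_0=r=1$, $\mu\equiv 1$, and $\ophi_f$ is just a unit times $\tau^d$, so the functional equation is transparent and no delicate bookkeeping of $\brac{f}{k}\bmod f$ is required. Here I anticipate proving the integrality $E_N(\beta)\in A_{(f)}[x]$ and the $\star$-reduction compatibility $\bar E_N(f^\ell\star\beta)=\tau^{\ell d}\bar E_N(\beta)$ in a single induction on $N$, so that each statement feeds the other, with the twisted-polynomial identity for $\ophi_f$ isolated beforehand as the key structural lemma that drives the cancellation $W_N=0$.
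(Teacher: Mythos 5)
Your proposal is correct and follows essentially the same route as the paper's proof: the decomposition of $\cL_\phi$ by $f$-divisibility via Lemma~\ref{L:muprops}(c) is the paper's Lemma~\ref{L:fSistar}, applying $\exp_\phi$ gives Lemma~\ref{L:Eistar}, the bound $\ord_f(\alpha_j)\geq -(q^j-1)$ handles the prime-to-$f$ part, and the Cayley--Hamilton identity $\ophi_f=\sum_\ell \ophi_{b_\ell}\tau^{\ell d}$ together with the Carlitz congruence $C_{f^\ell}(x)\equiv x^{q^{\ell d}}\pmod f$ is exactly Lemma~\ref{L:braccongs}, driving the same induction on the coefficient index. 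Your compatibility $\bar E_M(f^\ell\star\beta)=\bar E_M(\beta)^{q^{\ell d}}$ is just the paper's Lemma~\ref{L:braccongs}(c) repackaged, since $E_M(f^\ell\star\beta)=f^\ell\star E_M(\beta)$ exactly (the $\star$-action by $f^\ell$ is a ring endomorphism commuting with the construction), and it indeed follows from the integrality of $E_M(\beta)$ at the same level, as your simultaneous induction anticipates.
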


The proof of this theorem occupies the rest of this section.  However, as a direct result we see that
\begin{equation} \label{E:EphiIntegral}
  \cE_{\phi}(\beta,z) \in \bigcap_f \power{A_{(f)}[x]}{z} = \power{A[x]}{z}.
\end{equation}
We establish some notation.  For $\beta \in K[x]$, we set
\begin{equation} \label{E:Sidef}
  S_i(\beta) = \sum_{a \in A_{i+}} \frac{\mu(a)\, (a \star \beta)(x)}{a} \in K[x]
\end{equation}
so that
\[
  \cL_{\phi}(\beta,z) = \sum_{i =0}^\infty S_i(\beta) z^{q^i}.
\]
We define $E_i(\beta) \in K[x]$ so that
\begin{equation} \label{E:Eidef}
  \cE_{\phi}(\beta,z) = \exp_{\phi}\bigl( \cL_{\phi}(\beta,z) \bigr)  = \sum_{i=0}^{\infty} E_i(\beta) z^{q^i},
\end{equation}
and thus
\begin{equation} \label{E:EiSum}
  E_i(\beta) = \sum_{j=0}^i \alpha_j S_{i-j}(\beta)^{q^j}.
\end{equation}
By convention, for $i <0$ we set $S_i(\beta) = E_i(\beta)=0$.  We further define
\begin{equation} \label{E:Sistardef}
  S_i^{*}(\beta) := \sum_{\substack{a \in A_{i+} \\ f \nmid a}} \frac{\mu(a)\, (a \star \beta)(x)}{a}.
\end{equation}

\begin{lemma} \label{L:fSistar}
Let $\beta \in K[x]$, and let $f \in A_+$ be irreducible of degree $d$.  Then
\[
  f S_i^{*}(\beta) = f S_i(\beta) -  f \star (\mu(f) S_{i-d}(\beta)) - \sum_{\ell=2}^r f^\ell \star (b_{\ell} S_{i-\ell d}(\beta)).
\]
\end{lemma}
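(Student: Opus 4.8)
The plan is to start from the observation that the difference $S_i(\beta) - S_i^{*}(\beta)$ collects exactly the terms of $S_i(\beta)$ indexed by $a \in A_{i+}$ with $f \mid a$, so that proving the lemma is equivalent to showing
\[
  f\bigl(S_i(\beta) - S_i^{*}(\beta)\bigr) = f \star \bigl(\mu(f)\, S_{i-d}(\beta)\bigr) + \sum_{\ell=2}^r f^\ell \star \bigl(b_\ell\, S_{i-\ell d}(\beta)\bigr).
\]
First I would reindex the left-hand sum by writing each such $a$ as $a = f a'$ with $a' \in A_{(i-d)+}$. Using the monoidal property $a \star (b \star \beta) = (ab) \star \beta$ from~\eqref{E:starprops1}, we have $(fa') \star \beta = f \star (a' \star \beta)$, and the weight $f/(fa')$ collapses to $1/a'$, yielding
\[
  f\bigl(S_i(\beta) - S_i^{*}(\beta)\bigr) = \sum_{a' \in A_{(i-d)+}} \frac{\mu(fa')\,\bigl(f \star (a' \star \beta)\bigr)(x)}{a'}.
\]

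The key step is then to substitute the recursion of Lemma~\ref{L:muprops}(c), namely $\mu(fa') = \mu(f)\mu(a') + \sum_{\ell=2}^r b_\ell f^{\ell-1} \mu(a'/f^{\ell-1})$, into this sum and to separate the resulting pieces. For the leading piece I would pull the scalar $\mu(f) \in A$ through the $\star$-operation via~\eqref{E:starprops2}, recognize the remaining sum as $S_{i-d}(\beta)$, and obtain $f \star (\mu(f) S_{i-d}(\beta))$. For each $\ell$-th piece, the convention $\mu(b) = 0$ for $b \in K \setminus A_+$ forces $f^{\ell-1} \mid a'$, so I would reindex once more by $a' = f^{\ell-1} a''$ with $a'' \in A_{(i-\ell d)+}$; the composition law gives $f \star (a' \star \beta) = f^\ell \star (a'' \star \beta)$, the factor $f^{\ell-1}$ in the numerator cancels against the $f^{\ell-1}$ in the denominator, and pulling $b_\ell$ through $\star$ identifies the remaining sum as $b_\ell S_{i-\ell d}(\beta)$, producing $f^\ell \star (b_\ell S_{i-\ell d}(\beta))$. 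Summing over $\ell$ and rearranging then gives the claimed identity.

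I do not expect a genuinely hard step here; the content is careful bookkeeping with the two reindexings and the recursion for $\mu$. The one point requiring attention is that the $\star$-operation is only monoidal and not additive, so scalars and composite factors of $f$ must be moved through $\star$ solely by means of~\eqref{E:starprops1} and~\eqref{E:starprops2}, never by distributing over a sum; since each manipulation acts on a single index $a'$ (or $a''$) at a time, this causes no trouble. I would also record the boundary cases $i < d$ and $i < \ell d$, where the relevant $S_{i-d}(\beta)$ or $S_{i-\ell d}(\beta)$ vanish by the convention $S_j(\beta)=0$ for $j<0$, consistently with the empty index sets appearing on the left.
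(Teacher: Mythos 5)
Your proof is correct and is essentially the same as the paper's: reindex the multiples of $f$ as $a = fa'$, substitute the recursion of Lemma~\ref{L:muprops}(c), use the convention $\mu(a'/f^{\ell-1})=0$ unless $f^{\ell-1}\mid a'$ to reindex once more, and recollect via \eqref{E:starprops1}--\eqref{E:starprops2}; the paper merely folds your leading piece into the sum as the $\ell=1$ term using $b_1=\mu(f)$. One small remark on your final caveat: recollecting $\sum_{a''} \tfrac{b_\ell \mu(a'')}{a''}\, \bigl(f^\ell \star (a''\star\beta)\bigr)$ into $f^\ell \star \bigl(b_\ell S_{i-\ell d}(\beta)\bigr)$ does distribute $f^\ell\,\star$ over a sum, but only in the second argument, where additivity and $K$-linearity hold by \eqref{E:starprops1} and \eqref{E:starprops2} (it is only additivity in the first argument that fails).
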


\begin{proof}
Throughout we make frequent use of~\eqref{E:starprops1}--\eqref{E:starprops2}.  We see from~\eqref{E:Sistardef} that
\[
  f S_i^{*}(\beta) = f \Biggl( S_i(\beta) - \sum_{\substack{a \in A_{i+} \\ f \mid a}} \frac{\mu(a)(a \star \beta)}{a} \Biggr)
  = f \Biggl( S_i(\beta) - \sum_{a \in A_{(i-d)+}} \frac{ \mu(fa)((fa)\star \beta)}{fa} \Biggr),
\]
and so by Lemma~\ref{L:muprops}(c),
\begin{align*}
  f S_i^{*}(\beta) &= f S_i(\beta) - \sum_{a \in A_{(i-d)+}} \sum_{\ell = 1}^r \frac{b_{\ell} \mu \bigl(a/f^{\ell-1} \bigr)}{a / f^{\ell-1}} \, ((f a) \star \beta) \\
  &= f S_i(\beta) - \sum_{\ell = 1}^r \sum_{a \in A_{(i-\ell d)+}} \frac{b_\ell \mu(a)}{a} ((f^\ell a) \star \beta) \\
  &= f S_i(\beta) - \sum_{\ell=1}^r f^{\ell} \star \sum_{a \in A_{(i-\ell d)+}} \frac{b_{\ell} \mu(a)}{a} (a \star \beta),
\end{align*}
where in the second equality we have used the property that $\mu(a/f^{\ell-1}) = 0$ if $f^{\ell-1} \nmid a$.  The proof is complete after we recall that $b_1 = \mu(f)$.
\end{proof}

\begin{lemma} \label{L:Eistar}
Let $\beta \in K[x]$, and let $f \in A_+$ be irreducible of degree $d$.  For $i \geq 0$, let
\[
  E_i^*(\beta) = \sum_{j=0}^i \alpha_j f^{q^j} S_{i-j}^{*}(\beta)^{q^j}.
\]
Then
\[
  E_i^*(\beta) = \sum_{k=0}^{rd} \brac{f}{k} E_{i-k}(\beta)^{q^k} - \sum_{\ell=1}^r \sum_{k=\ell d}^{rd} \brac{b_{\ell}}{k - \ell d} \bigl( f^{\ell} \star E_{i-k}(\beta)^{q^{k-\ell d}}\bigr).
\]
\end{lemma}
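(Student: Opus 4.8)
The plan is to start from the definition $E_i^*(\beta) = \sum_{j=0}^i \alpha_j f^{q^j} S_{i-j}^*(\beta)^{q^j}$ and absorb the scalar factor into the Frobenius power, writing $\alpha_j f^{q^j} S_{i-j}^*(\beta)^{q^j} = \alpha_j \bigl(f S_{i-j}^*(\beta)\bigr)^{q^j}$. I would then substitute the expression for $f S_{i-j}^*(\beta)$ furnished by Lemma~\ref{L:fSistar}, which (after recalling $b_1 = \mu(f)$) reads $f S_{i-j}^*(\beta) = f S_{i-j}(\beta) - \sum_{\ell=1}^r f^\ell \star \bigl(b_\ell S_{i-j-\ell d}(\beta)\bigr)$. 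Expanding the $q^j$-th power across this difference, which is legitimate in characteristic $p$, splits $E_i^*(\beta)$ into a \emph{main} sum $\sum_j \alpha_j (f S_{i-j}(\beta))^{q^j}$ and, for each $\ell$, a \emph{correction} sum coming from the terms $f^\ell \star\bigl(b_\ell S_{i-j-\ell d}(\beta)\bigr)$.

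The device that makes everything go through is the commutation of the $\star$-operation with Frobenius: for every $a\in A$, $g\in K[x]$, and $j\ge 0$ one has $(a\star g)^{q^j} = a\star(g^{q^j})$. I would prove this by observing that both $g\mapsto g^{q^j}$ and $g\mapsto a\star g$ are ring endomorphisms of $K[x]$; by \eqref{E:starprops1}--\eqref{E:starprops2} the latter is $K$-linear with $x\mapsto C_a(x)$, while the former sends $c\mapsto c^{q^j}$ for $c\in K$ and $x\mapsto x^{q^j}$. The two composites agree on the generators $K\cup\{x\}$, both sending $c\mapsto c^{q^j}$ and $x\mapsto C_a(x)^{q^j}$, hence they coincide. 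Applying this with $a=f^\ell$, and pulling the scalars $b_\ell^{q^j}$ and $\alpha_{j'}^{q^j}$ through the $K$-linear map $f^\ell\star(\cdot)$, each correction sum becomes $\sum_j \alpha_j b_\ell^{q^j}\bigl(f^\ell\star S_{i-j-\ell d}(\beta)^{q^j}\bigr)$.

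The heart of the matching with the right-hand side is the functional equation for $\exp_\phi$. Extracting the coefficient of $z^{q^n}$ from \eqref{E:expfneq} gives, for every $a\in A$ and $n\ge 0$,
\[
  \alpha_n a^{q^n} = \sum_{k=0}^{r\deg a} \brac{a}{k}\alpha_{n-k}^{q^k}.
\]
For the main sum I would substitute $E_{i-k}(\beta)=\sum_{j'}\alpha_{j'}S_{i-k-j'}(\beta)^{q^{j'}}$ into $\sum_{k=0}^{rd}\brac{f}{k}E_{i-k}(\beta)^{q^k}$, reindex by $j=j'+k$, and recognize the inner sum $\sum_{k}\brac{f}{k}\alpha_{j-k}^{q^k}$ as $\alpha_j f^{q^j}$ (here $rd=r\deg f$); this recovers exactly $\sum_j \alpha_j (f S_{i-j}(\beta))^{q^j}$. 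For the correction terms I would run the identical manipulation with $a=b_\ell$: reindexing $k=m+\ell d$ turns the target double sum into $\sum_\ell\sum_m \brac{b_\ell}{m}\bigl(f^\ell\star E_{i-\ell d-m}(\beta)^{q^m}\bigr)$, and after substituting the definition of $E$ and collecting via $j=j'+m$ the inner sum is $\alpha_j b_\ell^{q^j}$, matching the correction sums above.

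The one point requiring care, and the main obstacle to a purely mechanical argument, is that the functional equation truncates the $k$-sum at $r\deg a$, whereas the statement sums $k$ up to $rd$, equivalently $m$ up to $(r-\ell)d$. For $a=b_\ell$ I would verify that $r\deg b_\ell \le (r-\ell)d$, which follows from Corollary~\ref{C:Pf1}(a), \eqref{E:belldef}, and $r_0\le r$; since $\brac{b_\ell}{m}=0$ for $m>r\deg b_\ell$, extending the range only appends zero terms, so the two summation conventions agree. With the ranges reconciled, the main sum equals the first sum on the right-hand side and the correction sums equal the second, yielding the identity. Throughout one uses the convention $S_i(\beta)=E_i(\beta)=0$ for $i<0$ to keep the reindexed sums honest.
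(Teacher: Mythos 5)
Your proposal is correct and takes essentially the same route as the paper: both arguments rest on Lemma~\ref{L:fSistar}, the functional equation $\exp_\phi(az)=\phi_a(\exp_\phi(z))$ applied with $a=f$ and $a=b_\ell$, and the bound $r\deg b_\ell\le(r-\ell)d$ from Corollary~\ref{C:Pf1}(a) and \eqref{E:belldef} that reconciles the summation ranges. The only difference is presentational: the paper obtains your two matching identities by comparing coefficients of $z^{q^i}$ in $\phi_f\bigl(\cE_\phi(\beta,z)\bigr)=\exp_\phi\bigl(f\cL_\phi(\beta,z)\bigr)$ and $\phi_{b_\ell}\bigl(\cE_\phi(f^\ell\star\beta,z^{q^{\ell d}})\bigr)=\exp_\phi\bigl(b_\ell\cL_\phi(f^\ell\star\beta,z^{q^{\ell d}})\bigr)$ and then invokes Lemma~\ref{L:fSistar} at the end, whereas you substitute Lemma~\ref{L:fSistar} first and perform the same reindexing at the coefficient level via $\alpha_n a^{q^n}=\sum_{k}\brac{a}{k}\alpha_{n-k}^{q^k}$.
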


\begin{proof}
By the functional equation for $\exp_\phi(z)$ in~\eqref{E:expfneq}, we know that
\[
  \phi_f \bigl(\cE_{\phi}(\beta,z) \bigr) = \exp_{\phi} \bigl( f \cL_{\phi}(\beta,z) \bigr).
\]
Using~\eqref{E:brac} the left side of this equation has the expansion
\[
  \phi_f \bigl(\cE_{\phi}(\beta,z) \bigr) = \sum_{i=0}^\infty \sum_{k=0}^{rd} \brac{f}{k} E_i(\beta)^{q^k} z^{q^{i+k}} = \sum_{i=0}^{\infty} \Biggl( \sum_{k=0}^{rd} \brac{f}{k} E_{i-k}(\beta)^{q^{k}} \Biggr) z^{q^i}.
\]
On the other hand for $\exp_{\phi}(f \cL_{\phi}(\beta,z))$ we similarly find the expansion
\[
  \exp_{\phi}\bigl(f \cL_{\phi}(\beta,z) \bigr) = \sum_{i=0}^{\infty} \Biggl( \sum_{j=0}^i \alpha_j f^{q^j} S_{i-j}(\beta)^{q^j} \Biggr) z^{q^i}.
\]
Comparing the coefficients of $z^{q^i}$ we arrive at the identity for $i \geq 0$,
\begin{equation} \label{E:Esum1}
  \sum_{k=0}^{rd} \brac{f}{k} E_{i-k}(\beta)^{q^k} = \sum_{j=0}^i \alpha_j f^{q^j} S_{i-j}(\beta)^{q^j}.
\end{equation}
Now since Corollary~\ref{C:Pf1}(a) and \eqref{E:belldef} imply that $\deg_{\theta} b_{\ell} \leq (1 - \ell/r)d$, it follows that $\deg_{\tau} \phi_{b_{\ell}} \leq (r-\ell)d$, and using a similar calculation to the above we have for $1 \leq \ell \leq r$,
\begin{align*}
\phi_{b_{\ell}}\bigl( \cE_{\phi}(f^{\ell} \star \beta,z^{q^{\ell d}} ) \bigr)
  &= \sum_{i=0}^\infty \Biggl( f^{\ell} \star \sum_{k=\ell d}^{rd} \brac{b_{\ell}}{k-\ell d} E_{i-k}(\beta)^{q^{k-\ell d}} \Biggr) z^{q^i}, \\
\phi_{b_{\ell}}\bigl( \cE_{\phi}(f^{\ell} \star \beta,z^{q^{\ell d}} ) \bigr) &= \exp_{\phi}\bigl( b_{\ell} \cL_{\phi}(f^{\ell}\star \beta,z^{q^{\ell d}}) \bigr) = \sum_{i=0}^\infty \Biggl( f^{\ell} \star \sum_{j=0}^i \alpha_j b_{\ell}^{q^j} S_{i-\ell d - j}(\beta)^{q^j} \Biggr) z^{q^i}.
\end{align*}
Thus for $1 \leq \ell \leq r$ and $i \geq 0$,
\begin{equation} \label{E:Esum2}
  f^{\ell} \star \sum_{k=\ell d}^{rd} \brac{b_{\ell}}{k-\ell d} E_{i-k}(\beta)^{q^{k - \ell d}}
  = f^{\ell} \star \sum_{j=0}^i \alpha_j b_\ell^{q^j} S_{i-\ell d - j}(\beta)^{q^j}.
\end{equation}
Using \eqref{E:Esum1}, \eqref{E:Esum2}, Lemma~\ref{L:fSistar}, and Lemma~\ref{L:Eistar}, we have
\begin{align*}
\sum_{k=0}^{rd} \brac{f}{k} E_{i-k}(\beta)^{q^k} -{} &\sum_{\ell=1}^r \sum_{k=\ell d}^{rd} \brac{b_{\ell}}{k-\ell d} \bigl( f^{\ell} \star E_{i-k}(\beta)^{q^{k - \ell d}} \bigr) \\
  &= \sum_{j=0}^i \alpha_j \biggl( f S_{i-j}(\beta) - \sum_{\ell=1}^r b_{\ell} \bigl( f^{\ell} \star S_{i-\ell d - j}(\beta) \bigr) \biggr)^{q^j}\\
  &= \sum_{j=0}^i \alpha_j f^{q^j} S_{i-j}^* (\beta)^{q^j}\\
  &= E_i^*(\beta),
\end{align*}
and we are done.
\end{proof}

For polynomials $P$, $Q \in A_{(f)}[x]$, we will say that $P \equiv Q \pmod{f}$ if $P-Q \in f A_{(f)}[x]$.

\begin{lemma} \label{L:braccongs}
Let $f \in A_+$ be irreducible of degree $d$.
\begin{enumerate}
\item[(a)] For $0 \leq k \leq d-1$,
\[
  \brac{f}{k} \equiv 0 \pmod{f}.
\]
\item[(b)] For $0 \leq k \leq rd$,
\[
  \brac{f}{k} - \sum_{\ell=1}^{\lfloor k/d \rfloor} \brac{b_{\ell}}{k-\ell d} \equiv 0 \pmod{f}.
\]
\item[(c)] Let $P(x) \in A_{(f)}[x]$.  For $0 \leq k \leq rd$,
\[
  \brac{f}{k} P(x)^{q^k} - \sum_{\ell=1}^{\lfloor k/d \rfloor} \brac{b_{\ell}}{k - \ell d} \bigl(f^{\ell} \star P(x)^{q^{k-\ell d}} \bigr) \equiv 0 \pmod{f}.
\]
\end{enumerate}
\end{lemma}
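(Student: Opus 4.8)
The plan is to reduce all three parts to a single Cayley--Hamilton identity for the Frobenius of $\ophi$. First I would record the engine. Assume $r_0 \geq 1$. Because $\FF_f = \FF_{q^d}$, every scalar $c \in \FF_f$ satisfies $c^{q^d} = c$, so $\tau^d$ commutes with $\FF_f$ and hence with each $\ophi_a$; thus $\tau^d$ is an endomorphism of $\ophi$. By Gekeler's Theorem~\ref{T:Gekeler} its characteristic polynomial $P_f(x)$ on $T_\lambda(\ophi)$ has coefficients in $A$, and Cayley--Hamilton (faithful on the Tate module for $\lambda \neq f$) gives $P_f(\tau^d) = 0$ in $\FF_f[\tau]$. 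Substituting $a_0 = c_f^{-1} f$, using the $\FF_q$-linearity $\ophi_{a_0} = c_f^{-1}\ophi_f$, multiplying by $c_f$, and invoking the definition~\eqref{E:belldef} of the $b_\ell$ (so that $-c_f a_\ell = b_\ell$ and $-c_f = b_{r_0}$), I expect this to rearrange to the key identity
\begin{equation} \label{E:CH}
  \ophi_f = \sum_{\ell=1}^{r} \ophi_{b_\ell}\, \tau^{\ell d} \quad \text{in } \FF_f[\tau],
\end{equation}
the sum extending to $r$ since $b_\ell = 0$ for $r_0 < \ell \leq r$.

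For parts (a) and (b) I would apply the coefficient-reduction homomorphism $A[\tau] \to \FF_f[\tau]$ to both $\phi_f = \sum_k \brac{f}{k}\tau^k$ and $\sum_\ell \phi_{b_\ell}\tau^{\ell d}$, whose $\tau^k$-coefficient is $\sum_{\ell=1}^{\lfloor k/d\rfloor}\brac{b_\ell}{k - \ell d}$. Identity~\eqref{E:CH} then forces $\phi_f - \sum_\ell \phi_{b_\ell}\tau^{\ell d}$ to have every coefficient divisible by $f$, and reading off the coefficient of $\tau^k$ gives part (b); the bound $\deg_\tau \phi_{b_\ell} = r\deg_\theta b_\ell \leq (r-\ell)d$, coming from Corollary~\ref{C:Pf1}(a) and~\eqref{E:belldef}, ensures the range $0 \leq k \leq rd$ loses nothing. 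Part (a) is the subcase $0 \leq k \leq d-1$, where $\lfloor k/d\rfloor = 0$ and the sum is empty. The excluded case $r_0 = 0$ I would handle directly: then $\ophi$ has rank $0$, so $\ophi_a = \bar a$ for all $a$ and $\ophi_f = \bar f = 0$, whence $\brac{f}{k} \equiv 0 \pmod f$ for every $k$, consistent with (a) and (b) as all $b_\ell$ then vanish.

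For part (c) the extra input is the Carlitz reduction $C_f(x) \equiv x^{q^d} \pmod f$, giving $C_{f^\ell}(x) \equiv x^{q^{\ell d}} \pmod f$ via $C_{f^\ell} = C_f^{\circ \ell}$; this follows from~\eqref{E:CH} applied to the rank-one module $C$, where $r_0 = 1$ and the $\tau^d$-coefficient of $C_f$ equals $1$, forcing $C_f \equiv \tau^d \pmod f$ as twisted polynomials. Since $A_{(f)}/fA_{(f)} \cong \FF_{q^d}$, each coefficient $p$ of $P$ satisfies $p^{q^{\ell d}} \equiv p \pmod f$, so $P(x)^{q^{\ell d}} \equiv P(x^{q^{\ell d}}) \pmod f$. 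Chaining these with~\eqref{E:stardef} and~\eqref{E:starprops2},
\[
  f^\ell \star P(x)^{q^{k-\ell d}} = P\bigl(C_{f^\ell}(x)\bigr)^{q^{k-\ell d}} \equiv P\bigl(x^{q^{\ell d}}\bigr)^{q^{k-\ell d}} \equiv \bigl(P(x)^{q^{\ell d}}\bigr)^{q^{k-\ell d}} = P(x)^{q^k} \pmod f .
\]
I would then factor $P(x)^{q^k}$ out of the expression in (c) and replace the scalar coefficient $\brac{f}{k} - \sum_\ell \brac{b_\ell}{k-\ell d}$ using part (b) to conclude. The main obstacle is establishing~\eqref{E:CH} with the bookkeeping exactly right---the commutation of $\tau^d$ with $\FF_f$, the tracking of $c_f$ and the $b_\ell$, and the separate rank-zero case; once~\eqref{E:CH} and the Carlitz reduction are secured, parts (a)--(c) are routine.
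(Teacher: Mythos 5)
Your proposal is correct and follows essentially the same route as the paper: Cayley--Hamilton for $\tau^d$ on $T_\lambda(\ophi)$ combined with the injectivity of $\End(\ophi)\otimes_A A_\lambda \to \End_{A_\lambda}(T_\lambda(\ophi))$ yields the identity $\ophi_f = \sum_{\ell}\ophi_{b_\ell}\tau^{\ell d}$ in $\FF_f[\tau]$, from which (a) and (b) follow by reading off $\tau^k$-coefficients, and (c) follows from (b) together with $C_{f^\ell}(x)\equiv x^{q^{\ell d}} \pmod{f}$ and the Frobenius-fixing of residues mod $f$. The only cosmetic difference is that the paper reduces (c) to monomials $P = a x^i$ by $\FF_q$-linearity, while you handle general $P$ at once via $P(x)^{q^{\ell d}} \equiv P\bigl(x^{q^{\ell d}}\bigr) \pmod{f}$; the two arguments are the same in substance.
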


\begin{proof}
Part (a) is a special case of (b), and is in fact well-known (e.g., see \cite[(1.4)(ii)]{Gekeler91}).  Part (b) is itself a special case of (c), using $P = 1$, but we require (b) first to prove (c).  Let $r_0 \leq r$ be the rank of the reduction $\ophi$ modulo $f$.  If $r_0 = 0$, then this implies that $\brac{f}{k} \equiv 0 \pmod{f}$ for all $0 \leq k \leq rd$, and moreover $b_\ell = 0$ for all $1 \leq \ell \leq r$, so each of (a)--(c) is trivially satisfied.

Assume $r_0 \geq 1$.  Fix an irreducible $\lambda \in A_+$ with $\lambda \neq f$.  We know by Gekeler~\cite[p.~190]{Gekeler91} that the natural map
\[
  \End(\ophi) \otimes_A A_{\lambda} \to \End_{A_{\lambda}} \bigl(T_{\lambda}(\ophi)\bigr)
\]
is injective.  By the Cayley-Hamilton theorem, $P_f(\tau^d) = 0$ in $\End_{A_{\lambda}} (T_{\lambda}(\ophi))$, and so in $\End(\ophi) \otimes_A A_{\lambda}$,
\[
  \tau^{r_0 d} + \ophi_{a_{r_0-1}} \tau^{(r_0-1)d} + \dots + \ophi_{a_0} = 0.
\]
However, the left-hand side is an element of $\End_{\FF_q}(\Ga/\FF_f) \cong \FF_f[\tau]$, and so multiplying by $\ophi_{c_f}$, we have
\[
  \ophi_f - \ophi_{b_1} \tau^d - \cdots - \ophi_{b_{r_0}} \tau^{r_0 d} = \ophi_f - \ophi_{b_1} \tau^d - \cdots - \ophi_{b_r} \tau^{rd} = 0,
\]
which is an equality in $\FF_f[\tau]$.  Thus in $A[\tau]$,
\[
  \sum_{k=0}^{rd} \brac{f}{k} \tau^k - \sum_{\ell = 1}^r \Biggl( \sum_{k=0}^{(r-\ell)d} \brac{b_{\ell}}{k} \tau^k \Biggr) \tau^{\ell d} \equiv 0 \pmod{f},
\]
and by reordering the sum, we see that
\[
  \sum_{k=0}^{rd} \Biggl( \brac{f}{k} - \sum_{\ell=1}^{\lfloor k/d \rfloor} \brac{b_{\ell}}{k- \ell d} \Biggr) \tau^k \equiv 0 \pmod{f}.
\]
This can only hold if each coefficient is $0$ modulo $f$, which proves (b).

For (c) we note that (b) implies for $a \in A_{(f)}$ and $0 \leq k \leq rd$ that
\begin{equation} \label{E:bracacong}
  \brac{f}{k} a^{q^k} - \sum_{\ell=1}^{\lfloor k/d \rfloor} \brac{b_{\ell}}{k-\ell d} a^{q^{k-\ell d}} \equiv 0 \pmod{f},
\end{equation}
since $a^{q^k} \equiv a^{q^{k-\ell d}} \pmod f$ for all $\ell$.  By \eqref{E:starprops1}--\eqref{E:starprops2}, it suffices to prove (c) for $P = a x^i$.  We recall (see \cite[\S 3.6]{Goss}) that for $\ell \geq 1$,
\[
  C_{f^{\ell}}(x) \equiv x^{q^{\ell d}} \pmod{f},
\]
and so
\[
f^{\ell} \star P(x) \equiv a x^{i q^{\ell d}} \pmod{f}.
\]
For $0 \leq k \leq rd$, this congruence then implies
\begin{multline*}
  \brac{f}{k} P(x)^{q^k} - \sum_{\ell=1}^{\lfloor k/d \rfloor} \brac{b_{\ell}}{k-\ell d} \bigl( f^{\ell}\star P(x)^{q^{k-\ell d}} \bigr) \\
  \equiv \Biggl( \brac{f}{k} a^{q^{k}} - \sum_{\ell=1}^{\lfloor k/d \rfloor} \brac{b_{\ell}}{k- \ell d} a^{q^{k-\ell d}} \Biggr) x^{iq^k} \pmod{f},
\end{multline*}
and part (c) then follows from \eqref{E:bracacong}.
\end{proof}

\begin{proof}[Proof of Theorem~\ref{T:Integrality}]
We prove $E_i(\beta) \in A_{(f)}[x]$ by induction on $i$.  We note that $E_i(\beta) = 0$ for $i <0$ and $E_0(\beta) =  S_0(\beta) = \beta \in A[x]$, which establish the base cases.  Now assume that $E_{j}(\beta) \in A_{(f)}[x]$ for all $j < i$.
By Lemma~\ref{L:Eistar} (and $\brac{f}{0} = f$), we have
\begin{multline*}
  E_i(\beta) = \frac{1}{f} \Biggl( E_i^{*}(\beta) - \sum_{k=1}^{d-1} \brac{f}{k} E_{i-k}(\beta)^{q^k} \Biggr.
   \\ \Biggl. {}- \sum_{k=d}^{rd} \biggl( \brac{f}{k} E_{i-k}(\beta)^{q^k} - \sum_{\ell=1}^{\lfloor k/d \rfloor} \brac{b_{\ell}}{k- \ell d} \bigl( f^{\ell} \star E_{i-k}(\beta)^{q^{k-\ell d}}\bigr) \biggr) \Biggr).
\end{multline*}
We consider each of the terms on the right-hand side of this equation.  Recall from Lemma~\ref{L:Eistar} that
\[
  \frac{1}{f} E_i^{*}(\beta) = \sum_{j=0}^i \alpha_j f^{q^j-1} S_{i-j}^*(\beta)^{q^j}.
\]
By construction in~\eqref{E:Sistardef}, $S_{i-j}^*(\beta) \in A_{(f)}[x]$.  Also by \cite[Thm.~3.1]{EP13},
\[
  \ord_f(\alpha_j) \geq -\ord_f \bigl( (\theta^{q^j} - \theta)(\theta^{q^j}-\theta^q) \cdots (\theta^{q^j} - \theta^{q^{j-1}} ) \bigr) = - \frac{q^j - q^{j- d\lfloor j/d \rfloor}}{q^d-1}
  \geq -(q^j-1),
\]
where the middle equality follows from the fact that $\ord_f(\theta^{q^k}-\theta) = 1$ if $d \mid k$, and $0$ otherwise (e.g., see~\cite[\S 2.1]{PLogAlg}).  Thus $\frac{1}{f} E_i^{*}(\beta) \in A_{(f)}[x]$ as desired.

By Lemma~\ref{L:braccongs}(a), for $1 \leq k \leq d-1$ we have that $\frac{1}{f}\brac{f}{k} \in A_{(f)}$, and by the induction hypothesis $E_{i-k}(\beta) \in A_{(f)}[x]$.  Thus $\frac{1}{f} \brac{f}{k} E_{i-k}(\beta) \in A_{(f)}[x]$.  For $d \leq k \leq rd$, taking $P = E_{i-k}(\beta)$ in Lemma~\ref{L:braccongs}(c) and using the induction hypothesis provides the $f$-integrality of the remaining sum.
\end{proof}

\section{Degree estimates and completion of the proof} \label{S:Degrees}

After proving Theorem~\ref{T:Integrality} the second part of Anderson's method is to estimate the $\infty$-adic size of the coefficients $E_i(\beta)$ from \eqref{E:Eidef} by defining a norm on $K[x]$ in which $A[x]$ is discrete.  The essential idea is to show that their size goes to $0$ as $i \to \infty$ and then to use their integrality to prove that $E_i(\beta)=0$ for $i \gg 0$.

We recall a definition of Anderson~\cite[\S 3.5]{And96}.  The Carlitz exponential is given by the power series
\begin{equation} \label{E:expC}
  \exp_C(z) = z + \sum_{i=1}^\infty \frac{z^{q^i}}{(\theta^{q^i} - \theta)(\theta^{q^i} - \theta^q) \cdots (\theta^{q^i} - \theta^{q^{i-1}} )},
\end{equation}
and its period lattice $\Lambda_C = \ker \exp_C$ has the form $\Lambda_C = A\tpi$, where $\tpi \in K_{\infty}((-\theta)^{1/(q-1)})$ is called the Carlitz period (see \cite[\S 3.2]{Goss}, \cite[\S 2.5]{Thakur}).  We let $\II = K_\infty \cdot \tpi \subseteq \CC_\infty$ be the imaginary axis, which has the properties that $\II/\Lambda_C$ is compact and that $C_{\tor}=K\cdot \tpi \subseteq \exp_C(\II)$, where $C_{\tor}$ denotes the torsion submodule of $C$.  Anderson then defines for $\beta \in K[x]$,
\begin{equation} \label{E:normdef}
  \norm{\beta} := \sup_{u \in \II}\, \bigl| \beta\bigl(\exp_C(u) \bigr) \bigr|_{\infty}
  = \sup_{a \in A}\sup_{u \in \II}\, \bigl| (a \star \beta)\bigl( \exp_C(u) \bigr) \bigr|_{\infty},
\end{equation}
which is well-defined since by \eqref{E:stardef},
\[
  \sup_{a \in A}\sup_{u \in \II}\, \bigl| (a \star \beta)\bigl( \exp_C(u) \bigr) \bigr|_{\infty}
  = \sup_{a \in A}\sup_{u \in \II}\, \bigl| \beta\bigl( C_a(\exp_C(u)) \bigr) \bigr|_{\infty}
  = \sup_{a \in A}\sup_{u \in \II}\, \bigl| \beta\bigl( \exp_C(au) \bigr) \bigr|_{\infty}.
\]
Anderson proves the following properties of $\norm{\,\cdot\,}$.

\begin{proposition}[{Anderson \cite[Prop.~2]{And96}}] \label{P:normprops}
The function $\norm{\,\cdot\,}$ defines an ultrametric norm on $K[x]$ that is invariant under the $\star$-operation.  In particular, for all $\beta$, $\gamma \in K[x]$, we have $\norm{\beta} < \infty$,
\[
  \norm{\beta+\gamma} \leq \max(\norm{\beta},\norm{\gamma}), \quad
  \norm{\beta\gamma} \leq \norm{\beta}\norm{\gamma}, \quad
  \norm{\beta} = 0\ \Rightarrow\ \beta = 0.
\]
For all $a \in A$, we have $\norm{a \star \beta} = \norm{\beta}$, and if $\xi \in C_{\tor}$, then $|\beta(\xi)|_{\infty} \leq \norm{\beta}$.  Furthermore, the following properties hold.
\begin{enumerate}
\item[(a)] If $\beta \in A[x]$ and $\norm{\beta} \leq 1$, then $\beta \in \FF_q$.
\item[(b)] If $\beta \in A[x]$ and $\norm{\beta} < 1$, then $\beta = 0$.
\end{enumerate}
Thus the ring $A[x]$ is a discrete subspace of $K[x]$ with respect to $\norm{\,\cdot\,}$.
\end{proposition}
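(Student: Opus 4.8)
The plan is to rewrite $\norm{\,\cdot\,}$ as a supremum over the compact set $S := \exp_C(\II)$ and read off most of the assertions formally. Since $\Lambda_C = A\tpi \subseteq \II$ and $\exp_C$ is continuous with kernel $\Lambda_C$, the map $\exp_C$ factors through the quotient $\II/\Lambda_C$ and induces a bijection $\II/\Lambda_C \iso S$; as $\II/\Lambda_C$ is compact, so is $S$, and by definition $\norm{\beta} = \sup_{s \in S}|\beta(s)|_\infty$. Finiteness of $\norm{\beta}$ is then immediate from boundedness of $S$. The ultrametric inequality and submultiplicativity follow termwise from the corresponding properties of $|\,\cdot\,|_\infty$ on $\CC_\infty$, and $\norm{\ell\beta} = |\ell|_\infty\norm{\beta}$ for $\ell \in K$ is clear. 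For definiteness, note that $S$ contains $C_{\tor} = \exp_C(K\tpi)$, which is infinite (as $K\tpi/\Lambda_C \cong K/A$); a polynomial vanishing on the infinite set $S$ must be zero, so $\norm{\beta} = 0$ implies $\beta = 0$. The bound $|\beta(\xi)|_\infty \le \norm{\beta}$ for $\xi \in C_{\tor}$ is likewise immediate from $C_{\tor} \subseteq S$.

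For $\star$-invariance, fix nonzero $a \in A$. As $a$ is a unit in the field $K_\infty$, multiplication by $a$ is a bijection of $\II = K_\infty\tpi$ onto itself, so by the functional equation \eqref{E:expfneq} we get $C_a(S) = \exp_C(a\II) = \exp_C(\II) = S$. Using \eqref{E:stardef},
\[
  \norm{a \star \beta} = \sup_{s \in S}\bigl|\beta\bigl(C_a(s)\bigr)\bigr|_\infty = \sup_{s' \in C_a(S)}|\beta(s')|_\infty = \sup_{s' \in S}|\beta(s')|_\infty = \norm{\beta},
\]
which also recovers the reformulation of $\norm{\,\cdot\,}$ displayed before the statement.

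The substance is in (a) and (b), i.e.\ the discreteness of $A[x]$. It suffices to prove that if $\beta \in A[x]$ is not in $\FF_q$, then $\norm{\beta} > 1$: indeed $\norm{c_0} = |c_0|_\infty = 1$ for $c_0 \in \FF_q^\times$ and $\norm{0} = 0$, so (a) and (b) follow at once. When $\deg_x\beta = 0$ this is clear, since then $\beta \in A \setminus \FF_q$ gives $\norm{\beta} = |\beta|_\infty = q^{\deg_\theta \beta} > 1$. The first new input is the value $\norm{x} = q^{1/(q-1)}$: taking $u = c\tpi$ with $\ord_\infty(c) \ge 1$ as a fundamental domain for $\II/\Lambda_C$ and using $\ord_\infty(\tpi) = -q/(q-1)$, a short estimate on the series \eqref{E:expC} shows that the linear term dominates, so $\ord_\infty\bigl(\exp_C(c\tpi)\bigr) = \ord_\infty(c) - q/(q-1)$; hence $\sup_{s \in S}|s|_\infty = q^{1/(q-1)}$, attained on the primitive $\theta$-torsion point $\xi$ with $\xi^{q-1} = -\theta$. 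Since $x^n$ is a monomial, $\norm{x^n} = q^{n/(q-1)}$ exactly.

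The main obstacle is that $\norm{\,\cdot\,}$ is far from a Gauss norm, because the Carlitz $A$-module structure of $S$ forces cancellation; the basic example is $\norm{x^q + \theta x} = \norm{C_\theta(x)} = \norm{\theta \star x} = \norm{x} = q^{1/(q-1)}$, even though $x^q$ alone has norm $q^{q/(q-1)}$. Thus one cannot bound $\norm{\beta}$ below by its largest term $|c_j|_\infty q^{j/(q-1)}$, and evaluation at a single torsion point never suffices, since $\infty$ has ramification index only $q-1$ in every cyclotomic function field $K(C[\mathfrak n])$, so torsion values resolve $x$-degrees only up to $\sim q-1$. To treat a nonconstant $\beta$ of arbitrary $x$-degree $N$, I would filter $S$ by the valuation subgroups $S^{(k)} = \{s \in S : \ord_\infty(s) \ge k - q/(q-1)\}$ (so $S^{(1)} = S$ and $S^{(1)}/S^{(2)} \cong \FF_q$ is spanned by the leading $\theta$-torsion) and argue by descent on $N$: either $\beta$ is nonzero at some point of the outer sphere $S^{(1)} \setminus S^{(2)}$, forcing $|\beta(s)|_\infty \ge q^{1/(q-1)} > 1$, or $\beta$ vanishes on all $\theta$-torsion, whence the separable polynomial $x^{q-1} + \theta = C_\theta(x)/x$ divides $\beta$ in $A[x]$ and one passes to the quotient of strictly smaller $x$-degree, tracking the effect on $\norm{\,\cdot\,}$ via $\star$-invariance and the functional equation. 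Showing that this descent terminates with the required strict inequality $\norm{\beta} > 1$ is the crux, and is precisely where Anderson's finer analysis of $\exp_C$ on the imaginary axis does the real work.
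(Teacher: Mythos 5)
Your treatment of the formal assertions is correct: compactness of $\exp_C(\II)$ (as the continuous image of the compact quotient $\II/\Lambda_C$) gives finiteness, the ultrametric and submultiplicative inequalities pass through the supremum, vanishing on the infinite set $C_{\tor} \subseteq \exp_C(\II)$ gives definiteness, surjectivity of $u \mapsto au$ on $\II$ gives $\star$-invariance for $a \neq 0$, and your evaluation $\norm{x^m} = q^{m/(q-1)}$ agrees with the value $j_0(x^m)=m/(q-1)$ quoted in the paper. Bear in mind, though, that the paper offers no proof of this proposition at all --- it is imported verbatim from Anderson~\cite[Prop.~2]{And96} --- and the only content of it that the paper actually uses downstream is precisely parts (a) and (b): discreteness of $A[x]$ is what converts the estimate $\norm{E_i(\beta)}<1$ into $E_i(\beta)=0$ in the proof of Theorem~\ref{T:Estimates}.

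Those parts are exactly what your proposal does not establish, as you concede, and the pivotal step of your descent is false as stated: nonvanishing of $\beta$ at a point of the outer sphere does not force $|\beta(s)|_\infty \geq q^{1/(q-1)}$. Concretely, take $\beta = x^{q-1}+\theta+1$ and $s=\xi$ the primitive $\theta$-torsion point with $\xi^{q-1}=-\theta$; then $\xi$ lies on the outer sphere and $\beta(\xi)=1 \neq 0$, which has absolute value $1$. The way to close the gap (in essence Anderson's argument) is global and arithmetic rather than a local valuation descent: for $\beta \in A[x]$ and any $\xi \in C_{\tor}$, the value $\beta(\xi)$ is integral over $A$ (torsion points are roots of the monic polynomials $C_a(x)$), and every conjugate $\sigma(\beta(\xi)) = \beta(\sigma\xi)$ is again a value of $\beta$ at a torsion point, hence bounded in absolute value by $\norm{\beta}$. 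So if $\norm{\beta} \le 1$, the characteristic polynomial of $\beta(\xi)$ over $K$ has coefficients lying in $A$ (integrality plus $A$ integrally closed) and of absolute value at most $1$ (ultrametric bounds on symmetric functions of the conjugates), i.e.\ coefficients in $\FF_q$; thus $\beta(\xi) \in \overline{\FF}_q$, and since cyclotomic function fields are geometric extensions of $K$ with constant field $\FF_q$, in fact $\beta(\xi) \in \FF_q$. As $C_{\tor}$ is infinite and $\FF_q$ is finite, $\beta - c$ has infinitely many roots for some $c \in \FF_q$, forcing $\beta = c \in \FF_q$; that is (a), and (b) follows since $\norm{c}=1$ for $c \in \FF_q^{\times}$. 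Without an argument of this kind (or a repaired, terminating descent), your proposal proves only the routine half of the proposition and not the half the paper depends on.
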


Recalling the defining polynomial for $\phi$ in \eqref{E:phidef}, we set
\[
  d_0 := \max ( \deg \kappa_1, \dots, \deg \kappa_r ) \in \ZZ
\]
and note that $d_0 \geq 0$.  For $\beta \in K[x]$, $\beta \neq 0$, we define positive $j_0(\beta) \in \QQ$ by
\[
  |\theta|_{\infty}^{j_0(\beta)} = \max( 1, \norm{\beta} ).
\]
We set $j_0(0) = 0$.  We recall that Anderson~\cite[p.~188]{And96} showed that $j_0(x^m) = m/(q-1)$. Although we will not need evaluations of $j_0$ for more general $\beta$, we remark that Anderson's evaluation together  with the ultrametric properties of $\norm{\,\cdot\,}$ can be used to evaluate $j_0(\beta)$ under various restrictions on the coefficients of $\beta$; for instance $j_0(\beta)=\frac{\deg(\beta)}{q-1}$ for all $\beta\in \FF_q[x]$ (i.e., if $\beta$ has constant coefficients).  Our main result of this section is the following.

\begin{theorem} \label{T:Estimates}
Let $\beta \in K[x]$.  For any $i \geq 0$,
\[
  \norm{E_i(\beta)} \leq |\theta|_{\infty}^{q^i(j_0(\beta) + d_0/(q-1) - i/r)}.
\]
Moreover, if $\beta \in A[x]$ and if
\[
 i > r \biggl( j_0(\beta) + \frac{d_0}{q-1} \biggr),
\]
then $\norm{E_i(\beta)} < 1$ and thus $E_i(\beta) = 0$.
\end{theorem}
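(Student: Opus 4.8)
The plan is to prove the estimate coefficient-by-coefficient using the expansion $E_i(\beta) = \sum_{j=0}^{i}\alpha_j S_{i-j}(\beta)^{q^j}$ from \eqref{E:EiSum}, the ultrametric and multiplicative properties of $\norm{\,\cdot\,}$ from Proposition~\ref{P:normprops}, and two separate inputs: a norm bound on the twisted power sums $S_i(\beta)$ and an $\infty$-adic bound on the exponential coefficients $\alpha_j$. First I would bound $\norm{S_i(\beta)}$. Because $\norm{\,\cdot\,}$ is $\star$-invariant we have $\norm{a\star\beta}=\norm{\beta}$, and since each term of $S_i(\beta)$ is the scalar multiple $\tfrac{\mu(a)}{a}(a\star\beta)$, the ultrametric inequality gives $\norm{S_i(\beta)}\le\max_{a\in A_{i+}}|\mu(a)/a|_{\infty}\,\norm{\beta}$. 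The degree bound $\deg\mu(a)\le(1-\tfrac1r)\deg a$ from Lemma~\ref{L:muprops}(d) yields $|\mu(a)/a|_{\infty}\le|\theta|_{\infty}^{-i/r}$ for $a\in A_{i+}$, and using $\norm{\beta}\le|\theta|_{\infty}^{j_0(\beta)}$ we obtain $\norm{S_i(\beta)}\le|\theta|_{\infty}^{\,j_0(\beta)-i/r}$.

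Next I would establish $|\alpha_j|_{\infty}\le|\theta|_{\infty}^{\,q^j(d_0/(q-1)-j/r)}$. Comparing coefficients of $z^{q^j}$ in $\exp_{\phi}(\theta z)=\phi_\theta(\exp_{\phi}(z))$ gives the recursion $(\theta^{q^j}-\theta)\alpha_j=\sum_{\ell=1}^{\min(j,r)}\kappa_\ell\,\alpha_{j-\ell}^{q^\ell}$, whence $\deg_\theta\alpha_j\le -q^j+\max_{\ell}\bigl(d_0+q^\ell\deg_\theta\alpha_{j-\ell}\bigr)$, using $\deg_\theta\kappa_\ell\le d_0$ and $\deg_\theta(\theta^{q^j}-\theta)=q^j$. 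I would prove the bound by induction, carrying the sharper form $\deg_\theta\alpha_j\le\frac{d_0}{q-1}(q^j-1)-\frac jr q^j$. The delicate point is the accounting for the additive $d_0$ terms: after substituting the sharper hypothesis into the recursion, each candidate $\ell$ contributes a constant $\frac{d_0}{q-1}(q-q^\ell)\le 0$ (vanishing only at $\ell=1$), and this overcompensates the $+d_0$ produced at each step, so the induction closes and gives the stated $d_0/(q-1)$ factor. Alternatively one may read this off the explicit formulas for $\alpha_j$ in \cite[\S 3]{EP13}.

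Combining the two estimates with $\norm{\alpha_j\gamma^{q^j}}\le|\alpha_j|_{\infty}\norm{\gamma}^{q^j}$, each summand obeys $\norm{\alpha_j S_{i-j}(\beta)^{q^j}}\le|\theta|_{\infty}^{\,q^j T}$, where $T:=j_0(\beta)+\frac{d_0}{q-1}-\frac ir$; here the $-\frac jr$ coming from $\alpha_j$ and the $-\frac{i-j}{r}$ coming from $S_{i-j}(\beta)$ cancel inside the factor $q^j$. Hence $\norm{E_i(\beta)}\le|\theta|_{\infty}^{\,\max_{0\le j\le i}q^j T}$. When $T\ge 0$, that is $i\le r\bigl(j_0(\beta)+\tfrac{d_0}{q-1}\bigr)$, the maximum is attained at $j=i$ and equals $q^iT$, which is exactly the asserted inequality. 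When $T<0$, that is $i> r\bigl(j_0(\beta)+\tfrac{d_0}{q-1}\bigr)$, the maximum is attained at $j=0$ and equals $T<0$, so $\norm{E_i(\beta)}<1$; invoking the integrality $E_i(\beta)\in A[x]$ from Theorem~\ref{T:Integrality} together with the discreteness of $A[x]$ in Proposition~\ref{P:normprops}(b) forces $E_i(\beta)=0$. This proves the final clause, and in this range the displayed bound then holds trivially.

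I expect the principal difficulty to be the exponential-coefficient estimate of the second paragraph, since it is the one genuinely new computation and the appearance of the exact constant $\frac{d_0}{q-1}$ hinges on the sign cancellation described there. A secondary conceptual point, worth flagging, is the interplay with vanishing: the crude term-by-term estimate gives only $|\theta|_{\infty}^{\max_j q^j T}$, which coincides with the sharp $|\theta|_{\infty}^{q^iT}$ precisely when $T\ge 0$, so in the complementary range the stated bound is recovered only after integrality collapses $E_i(\beta)$ to $0$. One cannot hope to extract more from the sum itself: for general $\phi$ there is no exact cancellation among the $S_i(\beta)$ (such as the identity $\cL_C(1,z)=\log_C(z)$ that makes $E_i$ vanish outright for the Carlitz module), and the irregular coefficients $\mu(a)$ are controlled purely through the uniform degree bound of Lemma~\ref{L:muprops}(d), which is exactly what makes the ultrametric estimate suffice.
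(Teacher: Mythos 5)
Your proposal is correct and follows essentially the same route as the paper: the paper likewise bounds $\norm{S_i(\beta)}$ using $\star$-invariance of the norm together with Lemma~\ref{L:muprops}(d), bounds $|\alpha_j|_{\infty}\leq|\theta|_{\infty}^{q^j(d_0/(q-1)-j/r)}$ (citing \cite[Thm.~3.1, Eq.~(28)]{EP13} rather than running your recursion argument, which is a correct reconstruction of that bound), and then combines the two through \eqref{E:EiSum} exactly as you do. Your explicit case split on the sign of $T=j_0(\beta)+d_0/(q-1)-i/r$ is in fact slightly more careful than the paper's write-up, which asserts $\max_{0\leq j\leq i}|\theta|_{\infty}^{q^jT}=|\theta|_{\infty}^{q^iT}$ without noting that this equality requires $T\geq 0$, and which, like you, recovers the displayed bound in the range $T<0$ only via the vanishing $E_i(\beta)=0$ for $\beta\in A[x]$.
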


\begin{proof}[Proof of Theorem~\ref{T:Estimates}]
Let $\beta \in K[x]$.  We first estimate $\norm{S_i(\beta)}$ for $i \geq 0$ using \eqref{E:Sidef}.  We see that
\begin{equation} \label{E:normSibeta}
  \norm{S_i(\beta)} \leq \max_{a \in A_{i+}} \biggl\{ \biggl| \frac{\mu(a)}{a} \biggr|_{\infty} \cdot \norm{a \star \beta} \biggr\}
  \leq \norm{\beta} \cdot |\theta|_{\infty}^{-\lceil i/r\rceil} \leq \norm{\beta} \cdot |\theta|_{\infty}^{-i/r},
\end{equation}
where the second inequality follows from the $\star$-invariance of $\norm{\,\cdot\,}$ and Lemma~\ref{L:muprops}(d) (noting that $\deg \mu(a)$ must be an integer).  The last inequality is included for ease of use.  Now by~\eqref{E:EiSum},
\begin{equation} \label{E:normEibeta}
  \norm{E_i(\beta)} \leq \max_{0 \leq j \leq i} \Bigl\{ |\alpha_j|_{\infty} \cdot \norm{S_{i-j}(\beta)}^{q^j} \Bigr\}.
\end{equation}
By \cite[Thm.~3.1, Eq.~(28)]{EP13}, we see that
\[
  |\alpha_j|_\infty \leq |\theta|_{\infty}^{q^j (d_0/(q-1)-j/r)},
\]
and so combining these estimates we have
\begin{align*}
  \norm{E_i(\beta)} &\leq \max_{0 \leq j \leq i} \Bigl\{ \norm{\beta}^{q^j} \cdot |\theta|_{\infty}^{q^j(d_0/(q-1) - j/r - (i-j)/r)} \Bigr\} \\
  &\leq \max_{0 \leq j \leq i}\, \Bigl\{ |\theta|_{\infty}^{q^j( j_0(\beta) + d_0/(q-1) - i/r)} \Bigr\} \\
  &= |\theta|_{\infty}^{q^i(j_0(\beta) + d_0/(q-1) - i/r)}.
\end{align*}
Now if $\beta \in A[x]$, then $E_i(\beta) \in A[x]$ by Theorem~\ref{T:Integrality} and \eqref{E:EphiIntegral}.  If $i > r(j_0(\beta) + d_0/(q-1))$, then these estimates imply $\norm{E_i(\beta)} < 1$.  Proposition~\ref{P:normprops}(b) then implies $E_i(\beta) = 0$.
\end{proof}

\begin{proof}[Proof of Theorem~\ref{T:Main}]
For $\beta \in A[x]$, Theorem~\ref{T:Integrality} implies that $\cE_{\phi}(\beta,z) \in \power{A}{z}$, and Theorem~\ref{T:Estimates} implies that the coefficients $E_i(\beta)$ of $\cE_{\phi}(\beta,z)$ are eventually $0$.
\end{proof}

\section{Examples} \label{S:Examples}

Theorem~\ref{T:Estimates} implies that
\begin{equation} \label{E:degree}
\deg_z( \cE_\phi(\beta,z)) \leq q^{ r(j_0(\beta) + d_0/(q-1))},
\end{equation}
although we will see in this section that this bound need not be sharp.  However, we find that for given $\beta \in A[x]$, the polynomial $\cE_\phi(\beta,z) \in A[x,z]$ is effectively computable.  Indeed we can calculate $E_i(\beta)$ for small~$i$ using \eqref{E:EiSum}, and thus we aim to calculate
\[
  S_i(\beta) = \sum_{a \in A_{i+}} \frac{\mu(a) (a\star \beta)(x)}{a}
\]
and the coefficients $\alpha_j$ of $\exp_{\phi}(z)$.  Unfortunately, direct computation of $\mu(a)$ based simply on the defining coefficients of $\phi$ and the factorization of $a$ can be complicated.

We consider cases where $\phi$ has rank $2$, and we assume $\phi_\theta = \theta + g\tau + \Delta \tau^2$, for $g$, $\Delta \in A$, and we take $\beta = x^m$, $m \geq 0$.  Letting $d_0 = \max( \deg g, \deg \Delta)$, we will assume that
\begin{equation}\label{E:md0}
  0 \leq m + d_0 < \tfrac{3}{2}(q-1),
\end{equation}
and since $j_0(x^m) = m/(q-1)$ by~\cite[p.~188]{And96}, we see right away from \eqref{E:degree} that
\begin{equation} \label{E:degE3}
  i > 2 \quad \Rightarrow \quad E_i(x^m) =0.
\end{equation}

\begin{example} \label{Ex:one}
Continuing with the notation above, we assume further that $0 \leq m \leq q-2$ and $0 \leq m+d_0 \leq q-1$.  Then we claim
\begin{equation} \label{E:cEm}
  \cE_\phi(x^m,z) = x^m z.
\end{equation}
Since $E_0(x^m) = x^m$, by \eqref{E:degE3} it remains to show that $E_1(x^m) = E_2(x^m) = 0$.
We first turn to $E_1(x^m) = S_1(x^m) + \alpha_1 S_0(x^m)^q$, and so
\[
\norm{E_1(x^m)} \leq \max\bigl( \norm{S_1(x^m)}, |\alpha_1|_{\infty} \norm{S_0(x^m)}^q \bigr).
\]
We note from~\eqref{E:normSibeta} that $\norm{S_1(x^m)} \leq \norm{x^m} \cdot |\theta|_{\infty}^{-1}$ and from~\cite[Eq.~(24)]{EP13} that $\alpha_1 = g/(\theta^q-\theta)$, and so $\deg(\alpha_1) =\deg g - q$.  By our chosen inequalities,
\[
  \norm{S_1(x^m)} \leq |\theta|_{\infty}^{m/(q-1) - 1} < 1,
\]
and, using that $S_0(x^m) =x^m$,
\[
  |\alpha_1|_{\infty} \norm{x^m}^q \leq |\theta|_{\infty}^{d_0-q} \cdot |\theta|_{\infty}^{mq/(q-1)} = |\theta|_{\infty}^{m+d_0 - (q-1) + m/(q-1)-1} < 1.
\]
Thus $\norm{E_1(x^m)} < 1$, and so $E_1(x^m) =0$ by Proposition~\ref{P:normprops}(b).  Likewise,
\[
  \norm{E_2(x^m)} \leq \max \bigl( \norm{S_2(x^m)}, |\alpha_1|_{\infty} \norm{S_1(x^m)}^q, |\alpha_2|_{\infty} \norm{S_0(x^m)}^{q^2} \bigr).
\]
We see from~\eqref{E:normSibeta} that
\begin{equation} \label{E:normS2}
  \norm{S_2(x^m)} \leq |\theta|_{\infty}^{m/(q-1)} \cdot |\theta|_{\infty}^{-1} < 1.
\end{equation}
Also,
\begin{equation} \label{E:normalpha1S1}
  |\alpha_1|_\infty \norm{S_1(x^m)}^q \leq |\theta|_{\infty}^{d_0-q} \cdot |\theta|_{\infty}^{mq/(q-1) - q} \leq |\theta|_{\infty}^{-1} \cdot |\theta|_{\infty}^{q(m/(q-1)-1)} < 1,
\end{equation}
where the second inequality follows since $d_0 \leq q-1$.  Finally, by \cite[Eq.~(24)]{EP13}, we see that $\deg(\alpha_2) \leq \max ( (q+1)d_0 - 2q^2,d_0-q^2 ) = d_0-q^2$,
and so
\begin{equation} \label{E:normalpha2S0}
  |\alpha_2|_{\infty} \norm{S_0(x^m)}^{q^2} \leq |\theta|_{\infty}^{d_0 -q^2} \cdot |\theta|_{\infty}^{mq^2/(q-1)}
  = |\theta|_{\infty}^{d_0-q+1} \cdot |\theta|_{\infty}^{-(q^3-1)/(q-1) + mq^2/(q-1)} < 1.
\end{equation}
Thus $\norm{E_2(x^m)} < 1$, and so $E_2(x^m)=0$.
\end{example}

\begin{example} \label{Ex:two}
If we assume instead the weaker conditions that $0 \leq m \leq q-2$ and $0 \leq d_0 \leq q$ (but still $0 \leq m + d_0 < \tfrac{3}{2} (q-1)$), then we find
\begin{equation}
  \cE_{\phi}(x^m,z) = x^m z + E_1(x^m)z^q,
\end{equation}
where it is possible that $E_1(x^m)$ is non-zero.  The derivation is similar to Example~\ref{Ex:one}, and the observation is that similar estimates to~\eqref{E:normS2}--\eqref{E:normalpha2S0} can be made with only small modifications.
\end{example}

We now consider the prospect of calculating $E_i(x^m)$ (in particular $E_1(x^m)$ as in Example~\ref{Ex:two}), and so we investigate computing $\mu(a)$.  For $f \in A_+$ irreducible of degree $d$, the Hasse invariant $H(\phi;f) \in A$ is defined to be the coefficient $\brac{f}{d}$ of $\tau^d$ in $\phi_f$, and utilizing results of Gekeler, Hsia, and Yu~\cite{Gekeler83}, \cite{Gekeler91}, \cite{HsiaYu00}, we observe that
\begin{equation} \label{E:mufHasse}
  \mu(f) = H(\phi;f) \bmod f = \brac{f}{d} \bmod f.
\end{equation}
That is, $\mu(f)$ is the remainder of $\brac{f}{d}$ upon division by $f$.  When $d=1$ and $f = \theta + c$, $c \in \FF_q$, then this implies
\begin{equation} \label{E:mudeg1}
  \mu(f) = g \bmod f = g(-c).
\end{equation}
When $d=2$ and $f = \theta^2 + c_1\theta + c_0$, $c_i \in \FF_q$, we have
\[
  \mu(f) = \bigl( (\theta^{q^2} + \theta +c_1 ) \Delta + g^{q+1} \bigr) \bmod f,
\]
and correspondingly the complexity of $\mu(f)$ grows with $d$, though it would be interesting to investigate how the methods of \cite{EP13}, \cite{EP14} might be applied to simplify it.

\begin{lemma} \label{L:S1beta}
Assume that $d_0=q$ and $0 \leq m \leq q-2$, and let $\beta = x^m$.  Suppose that $g =\sum_{i=0}^q b_i \theta^i$, $b_i \in \FF_q$.  Then
\[
  S_1(x^m) = \biggl( -\frac{g}{\theta^q-\theta} + b_q \biggr) x^{mq}
  - \sum_{\ell=1}^m \sum_{i=q-\ell}^{q-1} (-1)^i b_i \binom{m}{\ell} \binom{\ell-1}{q-1-i} \theta^{\ell-q+i} \cdot x^{mq - \ell(q-1)}.
\]
\end{lemma}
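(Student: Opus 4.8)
The goal is to compute $S_1(x^m)$ explicitly, which by definition is the sum over monic linear polynomials of $\mu(f)\,(f \star x^m)(x)/f$. The plan is to expand each of the three factors---the coefficient $\mu(f)$, the star-operation $(f\star x^m)(x)$, and the denominator $1/f$---as explicit functions of the parameter $c$ when $f = \theta + c$, and then to evaluate the sum over $c \in \FF_q$ using power-sum identities over finite fields.

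First I would fix $f = \theta + c$ with $c \in \FF_q$ and record the three ingredients. By \eqref{E:mudeg1} we have $\mu(\theta+c) = g(-c) = \sum_{i=0}^q b_i(-c)^i$. For the star-operation, since $C_{\theta+c}(x) = C_\theta(x) + cx = \theta x + x^q + cx$, we get $(f \star x^m)(x) = ((\theta+c)x + x^q)^m$, which I would expand by the binomial theorem as $\sum_{\ell=0}^m \binom{m}{\ell}(\theta+c)^{m-\ell} x^{(m-\ell)} (x^q)^{\ell}$, giving monomials $x^{mq - \ell(q-1)}$ with coefficient $\binom{m}{\ell}(\theta+c)^{m-\ell}$. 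Dividing by $f = \theta + c$ is harmless when $m - \ell \geq 1$ but produces the singular term $1/(\theta+c)$ when $\ell = m$.

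The key computational step is then to interchange the order of summation and, for each fixed power $x^{mq - \ell(q-1)}$, to evaluate $\sum_{c \in \FF_q} g(-c)(\theta+c)^{m-\ell-1}$ as a polynomial in $\theta$. Expanding $g(-c) = \sum_i (-1)^i b_i c^i$ and $(\theta+c)^{m-\ell-1} = \sum_j \binom{m-\ell-1}{j}\theta^{m-\ell-1-j} c^j$, each inner sum reduces to power sums $\sum_{c \in \FF_q} c^{i+j}$, which equal $-1$ when $(q-1) \mid (i+j)$ with $i+j > 0$, and $0$ otherwise. Tracking which pairs $(i,j)$ satisfy the divisibility constraint---together with the range $q - \ell \leq i \leq q-1$ forced by the binomial $\binom{\ell-1}{q-1-i}$ appearing in the target formula---is what produces the stated double sum. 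The $\ell = m$ term requires separate handling: here the denominator contributes $\sum_c g(-c)/(\theta+c)$, and I would use the identity $g(-c)/(\theta+c) = -g(\theta)/(\theta+c) + (\text{polynomial part})$ coming from polynomial division of $g$ by $\theta + c$, summed against $\sum_{c}1/(\theta+c) = (\theta^q - \theta)^{-1}\sum_c \prod_{c'\neq c}(\theta + c')$; this yields the leading term $\bigl(-g/(\theta^q-\theta) + b_q\bigr)x^{mq}$, with $b_q$ emerging as the top coefficient of $g$ that survives the division.

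The main obstacle will be the careful bookkeeping in the second step: correctly identifying, for each residue class modulo $q-1$, exactly which monomials $\theta^{\ell - q + i}$ survive and matching the combinatorial coefficients $(-1)^i b_i \binom{m}{\ell}\binom{\ell-1}{q-1-i}$ against what the power-sum evaluation produces. In particular one must verify that the binomial $\binom{\ell-1}{q-1-i}$ arises precisely from reindexing $\binom{m-\ell-1}{j}$ after imposing $i + j \equiv 0 \pmod{q-1}$ within the allowed ranges, and confirm that the constraint $q - \ell \leq i \leq q-1$ is exactly the set of indices giving a nonzero contribution. The $\ell = m$ boundary term, where the division by $f$ is singular, is the most delicate and must be isolated before the power-sum machinery is applied to the remaining $\ell < m$ terms.
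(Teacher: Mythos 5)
Your proposal follows essentially the same route as the paper's proof: expand $\mu(\theta+c)=g(-c)$ via \eqref{E:mudeg1}, binomially expand $((\theta+c)x+x^q)^m$, interchange the sums, and evaluate the resulting power sums $\sum_{c\in\FF_q}c^{i+j}$ (where the hypothesis $0\le m\le q-2$ forces $i+j=q-1$ as the only surviving case, yielding the binomial $\binom{\ell-1}{q-1-i}$), with the singular $x^{mq}$ term handled separately by partial-fraction identities over $\FF_q$, exactly as the paper does using \eqref{E:deg1}. Two harmless slips to correct when writing up the details: the monomial $x^{mq-\ell(q-1)}$ pairs with the coefficient $\binom{m}{\ell}(\theta+c)^{\ell}$ (not $(\theta+c)^{m-\ell}$), and the division identity should read $g(-c)/(\theta+c)=g(\theta)/(\theta+c)-h_c(\theta)$, with no minus sign on $g(\theta)$, which combined with $\sum_{c}1/(\theta+c)=-1/(\theta^q-\theta)$ gives the correct leading term $\bigl(-g/(\theta^q-\theta)+b_q\bigr)x^{mq}$.
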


\begin{proof}
We first note that
\begin{align*}
S_1(x^m) &= \sum_{c \in \FF_q} \frac{\mu(\theta+c) ((\theta+c)x + x^q)^m}{\theta+c} \\
&= \sum_{c\in \FF_q} \sum_{i=0}^q \frac{b_i(-c)^i ((\theta+c)x + x^q)^m}{\theta+c} \\
&= \begin{aligned}[t]
  b_0 &\sum_{\ell=0}^m \binom{m}{\ell} x^{\ell + q(m-\ell)} \sum_{c \in\FF_q} (\theta+c)^{\ell-1} \\
  &{}+ \sum_{i=1}^q (-1)^i b_i \sum_{\ell=0}^m \binom{m}{\ell} x^{\ell+q(m-\ell)}
  \sum_{c \in \FF_q} c^i(\theta+c)^{\ell-1},
  \end{aligned}
\end{align*}
where the middle equality follows from~\eqref{E:mudeg1}.  It is well-known \cite[Ch.~5]{Thakur} that
\[
  \sum_{c \in \FF_q} (\theta+c)^{\ell-1} = \begin{cases}
  -\dfrac{1}{\theta^q-\theta} & \textup{if $\ell=0$,} \\
  0 & \textup{if $1 \leq \ell \leq q-1$,}
  \end{cases}
\]
and by direct calculation we see that for $i \geq 1$ and $\ell \geq 1$,
\[
  \sum_{c \in \FF_q} c^i (\theta+c)^{\ell-1} = -\sum_{\substack{s=0 \\ (q-1) \mid (s+i) }}^{\ell-1} \binom{\ell-1}{s} \theta^{\ell-1-s}.
\]
We then observe that for $1 \leq i \leq q-1$,
\begin{equation} \label{E:deg1}
\sum_{c\in \FF_q}\frac{c^i}{\theta+c}=(-1)^{i+1}\frac{\theta^i}{\theta^q-\theta},
\end{equation}
which follows from standard arguments (e.g., see the proof of \cite[Lem.~3.1]{EPetrov15}).  We note also that $0 \leq m \leq q-2$ and $1 \leq i \leq q$ imply that $1 \leq s+i \leq 2q-3 < 2(q-1)$, so $(q-1) \mid (s+i)$ if and only if $s = q-1-i$.  Taking this all together, we see that
\begin{align*}
S_1(x^m) = - \sum_{i=0}^{q-1} &\frac{b_i \theta^i}{\theta^q-\theta}\cdot x^{mq} - \frac{b_q \theta}{\theta^q-\theta} \cdot x^{mq} \\
&{}- \sum_{i=1}^{q-1} (-1)^i b_i \sum_{\ell=q-i} \binom{m}{\ell} \binom{\ell-1}{q-1-i} \theta^{\ell-q+i} \cdot x^{\ell+q(m-\ell)},
\end{align*}
from which the result follows.
\end{proof}

\begin{example}
By Lemma~\ref{L:S1beta}, if $g = \sum_{i=0}^q b_i \theta^i$ ($\deg g \leq q$), $\deg \Delta \leq q$, and $m=0$, then
\[
  S_1(1) = -\frac{g}{\theta^q-\theta} + b_q,
\]
and since $\alpha_1 = g/(\theta^q-\theta)$, we have
\[
  E_1(1) = S_1(1) + \alpha_1 S_0(1)^q = b_q.
\]
Therefore,
\begin{equation} \label{E:cE1}
  \cE_{\phi}(1,z) = z + b_q z^q.
\end{equation}
If we take $m=1$ and assume $q\geq 3$, then
\[
  S_1(x) = \biggl(-\frac{g}{\theta^q -\theta} + b_q \biggr) x^q - b_{q-1} x,
\]
and so
\[
  E_1(x) = S_1(x) + \alpha_1 S_0(x)^q = b_qx^q - b_{q-1}x.
\]
Thus
\begin{equation} \label{E:cEx}
  \cE_{\phi}(x,z) = xz + (b_q x^q - b_{q-1}x) z^q, \quad (q \geq 3).
\end{equation}
We note that the condition $m+q < \tfrac{3}{2}(q-1)$ may not be satisfied for $q \leq 5$, but in these cases \eqref{E:cE1} and \eqref{E:cEx} can be verified directly.
\end{example}

\begin{example}
We present now computations of special $L$-values for $\phi$ in terms of logarithms as in \S\ref{S:LogAlg}.  Assume that $g = \sum_{i=0}^q b_i\theta^i$ ($\deg g \leq q$) and that $\deg \Delta \leq q$.  By~\eqref{E:Lphibeta}, we have $L(\phi^{\vee},0) = \cL_{\phi}(1,z)|_{z=1}$, and so by~\eqref{E:cE1},
\begin{equation}
\exp_{\phi} \bigl( L(\phi^{\vee},0) \bigr) = \cE_{\phi}(1,z)|_{z=1} =  1 + b_q \in \FF_q.
\end{equation}
It is tempting to write ``$L(\phi^{\vee},0) = \log_{\phi}(1 + b_q)$,'' but $1+b_q$ may not be within the radius of convergence of $\log_\phi(z)$ (see~\cite[Cor.~4.2]{EP13}, \cite[Rem.~6.11]{EP14}).  Instead we will write
\[
L(\phi^{\vee},0) = \Log_{\phi}(1+b_q)
\]
to express that $L(\phi^{\vee},0)$ is a logarithm of $1+b_q$.

If we let $\chi : A \to \FF_q$ be the Dirichlet character defined by $\chi(a) = a(0)$ and we take $\zeta =\be(1/\theta) = (-\theta)^{1/(q-1)}$, then
\[
  \cL_{\phi}(x,z)\big|_{x=\zeta,z=1} = L(\phi^{\vee},\chi,0) \cdot \zeta.
\]
Then for $q \geq 3$, \eqref{E:cEx} implies
\begin{equation}
  \exp_{\phi} \bigl( L(\phi^{\vee},\chi,0) \cdot \zeta \bigr) = \cE_{\phi}(x,z)\big|_{x=\zeta,z=1} = (1-b_{q-1})\zeta + b_q\zeta^q, \quad (q \geq 3),
\end{equation}
and so using the convention for $\Log_\phi$ (versus $\log_\phi$) from the previous paragraph
\[
L(\phi^{\vee},\chi,0) = \frac{1}{\zeta} \cdot \Log_{\phi} \bigl(\zeta(1-b_{q-1} -b_q\theta)\bigr).
\]
\end{example}


\begin{thebibliography}{99}

\bibitem{And94} %
G. W. Anderson, \textit{Rank one elliptic $A$-modules and $A$-harmonic series}, Duke Math. J. \textbf{83} (1994), no. 3, 491--542.

\bibitem{And96} %
G. W. Anderson, \textit{Log-algebraicity of twisted $A$-harmonic series and special values of $L$-series in characteristic $p$}, J. Number Theory \textbf{60} (1996), no. 1, 165--209.

\bibitem{AnglesNgoDacTavares17} %
B. Angl\`{e}s, T. Ngo Dac, and F. Tavares Ribeiro, \textit{Stark units in positive characteristic}, Proc. Lond. Math. Soc. (3) \textbf{115} (2017), no.~4, 763--812.

\bibitem{AnglesPellarinTavares16} %
B. Angl\`{e}s, F. Pellarin, and F. Tavares Ribeiro, \textit{Arithmetic of positive characteristic $L$-series values in Tate algebras. With an appendix by F.~Demeslay}, Compos. Math. \textbf{152} (2016), no.~1, 1--61.

\bibitem{AnglesPellarinTavaresTAMS} %
B. Angl\`{e}s, F. Pellarin, and F. Tavares Ribeiro, \textit{Anderson-Stark units for $\mathbb{F}_q[\theta]$}, Trans. Amer. Math. Soc. (to appear).

\bibitem{AnglesTaelman15} %
B. Angl\`{e}s and L. Taelman, \textit{Arithmetic of characteristic $p$ special $L$-values. With an appendix by V.~Bosser}, Proc. Lond. Math. Soc. (3) \textbf{110} (2015), no.~4, 1000--1032.

\bibitem{AnglesTavares17} %
B. Angl\`{e}s and F. Tavares Ribeiro, \textit{Arithmetic of function field units}, Math. Ann. \textbf{367} (2017), no.~1--2, 501--579.

\bibitem{CP11} %
C.-Y. Chang and M. A. Papanikolas, \textit{Algebraic relations among periods and logarithms of rank~$2$ Drinfeld modules}, Amer. J. Math. \textbf{133} (2011), no.~2, 359--391.

\bibitem{CP12} %
C.-Y. Chang and M. A. Papanikolas, \textit{Algebraic independence of periods and logarithms of Drinfeld modules. With an appendix by B.~Conrad}, J. Amer. Math. Soc. \textbf{25} (2012), no.~1, 123--150.

\bibitem{EP13} %
A. El-Guindy and M. A. Papanikolas, \textit{Explicit formulas for Drinfeld modules and their periods}, J. Number Theory \textbf{133} (2013), no.~6, 1864--1886.

\bibitem{EP14} %
A. El-Guindy and M. A. Papanikolas, \textit{Identities for Anderson generating functions for Drinfeld modules}, Monatsh. Math. \textbf{173} (2014), no. 3--4, 471--493.

\bibitem{EPetrov15} %
A. El-Guindy and A. Petrov, \textit{On symmetric powers of $\tau$-recurrent sequences and deformations of Eisenstein series}, Proc. Amer. Math. Soc. \textbf{143} (2015), no. 8, 3303--3318.

\bibitem{Fang15} %
J. Fang, \textit{Equivariant special $L$-values of abelian $t$-modules}, arXiv:1503.07243, 2015.

\bibitem{Gardeyn02} %
F. Gardeyn, \textit{A Galois criterion for good reduction of $\tau$-sheaves}, J. Number Theory \textbf{97} (2002), no.~2 , 447--471.

\bibitem{Gekeler83} %
E.-U. Gekeler, \textit{Zur Arithmetik von Drinfeld-Moduln}, Math. Ann. \textbf{262} (1983), no. 2, 167--182.

\bibitem{Gekeler91} %
E.-U. Gekeler, \textit{On finite Drinfeld modules}, J. Algebra \textbf{141} (1991), no. 1, 187--203.

\bibitem{Goss83} %
D. Goss, \textit{On a new type of $L$-function for algebraic curves over finite fields}, Pacific J. Math. \textbf{105} (1983), no.~1, 143--181.

\bibitem{Goss92} %
D. Goss, \textit{$L$-series of $t$-motives and Drinfeld modules}, in: The Arithmetic of Function Fields (Columbus, OH, 1991), de Gruyter, Berlin, 1992, pp.~313--402.

\bibitem{Goss} %
D. Goss, \textit{Basic Structures of Function Field Arithmetic}, Springer-Verlag, Berlin, 1996.

\bibitem{GreenP} %
N. Green and M. A. Papanikolas, \textit{Special $L$-values and shtuka functions for Drinfeld modules on elliptic curves}, Res. Math. Sci. (to appear).

\bibitem{HsiaYu00} %
L.-C. Hsia and J. Yu, \textit{On characteristic polynomials of geometric Frobenius associated to Drinfeld modules}, Compositio Math. \textbf{122} (2000), no. 3, 261--280.

\bibitem{LP13} %
B. A. Lutes and M. A. Papanikolas, \textit{Algebraic independence of values of Goss $L$-functions at $s=1$}, J. Number Theory \textbf{133} (2013), no.~3, 1000--1011.

\bibitem{PLogAlg} %
M. A. Papanikolas, \textit{Log-algebraicity on tensor powers of the Carlitz module and special values of Goss $L$-functions}, in preparation.

\bibitem{Taelman09} %
L. Taelman, \textit{Special $L$-values of $t$-motives: a conjecture}, Int. Math. Res. Not. IMRN \textbf{2009} (2009), no.~16, 2957--2977.

\bibitem{Taelman10} %
L. Taelman, \textit{A Dirichlet unit theorem for Drinfeld modules}, Math. Ann. \textbf{348} (2010), no.~4, 899--907.

\bibitem{Taelman12} %
L. Taelman, \textit{Special $L$-values of Drinfeld modules}, Ann. of Math. (2) \textbf{175} (2012), no.~1, 369--391.

\bibitem{Thakur92} %
D. S. Thakur, \textit{Drinfeld modules and arithmetic in the function fields}, Internat. Math. Res. Notices \textbf{1992} (1992), no. 9, 185--197.

\bibitem{Thakur} %
D. S. Thakur, \textit{Function Field Arithmetic}, World Scientific Publishing, River Edge, NJ, 2004.

\bibitem{Washington} %
L. C. Washington, \textit{Introduction to Cyclotomic Fields}, Springer-Verlag, New York, 1982.

\bibitem {Yu86} %
J. Yu, \textit{Transcendence and Drinfeld modules}, Invent. Math.  \textbf{83} (1986),  no.~3, 507--517.

\bibitem {Yu97} %
J. Yu, \textit{Analytic homomorphisms into Drinfeld modules}, Ann. of Math. (2) \textbf{145} (1997), no.~2, 215--233.


\end{thebibliography}
\end{document}